\newtheoremstyle{bracket}{1ex}{2ex}{\rm}{}{\bfseries}{}{0.8em}{\thmnumber{(#2)}}
\newtheoremstyle{thm}{1ex}{2ex}{\itshape}{}{\bfseries}{}{0.9em}{\thmnumber{(#2)}\thmname{ #1}\thmnote{ (#3)}}
\newtheoremstyle{example}{1ex}{2ex}{\rm}{}{\bfseries}{}{0.8em}{\thmnumber{(#2)}\thmname{ #1}}
\theoremstyle{bracket}
\newtheorem{no}{}[section]
\theoremstyle{thm}
\newtheorem{prop}[no]{Proposition}
\newtheorem{cor}[no]{Corollary}
\newtheorem{thm}[no]{Theorem}
\theoremstyle{example}
\newtheorem{exa}[no]{Example}
\DeclareMathOperator{\ke}{Ker}
\DeclareMathOperator{\spec}{Spec}
\DeclareMathOperator{\face}{face}
\DeclareMathOperator{\cone}{cone}
\DeclareMathOperator{\rk}{rk}
\DeclareMathOperator{\depth}{depth}
\DeclareMathOperator{\diff}{Diff}
\DeclareMathOperator{\ob}{Ob}
\DeclareMathOperator{\pr}{pr}
\newcommand{\N}{\mathbbm{N}}
\newcommand{\Z}{\mathbbm{Z}}
\newcommand{\Q}{\mathbbm{Q}}
\newcommand{\R}{\mathbbm{R}}
\newcommand{\dfgl}{\mathrel{\mathop:}=}
\newcommand{\Id}{{\rm Id}}
\newcommand{\sig}{\Sigma}
\newcommand{\red}{{\rm red}}
\newcommand{\fleq}{\preccurlyeq}
\newcommand{\alg}{{\sf Alg}}
\newcommand{\ann}{{\sf Ann}}
\newcommand{\C}{{\sf C}}
\newcommand{\D}{{\sf D}}
\newcommand{\mon}{{\sf Mon}}
\newcommand{\sch}{{\sf Sch}}
\newcommand{\eps}{\varepsilon}
\newcommand{\lsq}{\hskip1pt\raisebox{.13ex}{\rule{.55ex}{.55ex}\hskip1pt}}
\newcommand{\sq}{\hskip2pt\raisebox{.225ex}{\rule{.8ex}{.8ex}\hskip2pt}}
\newcommand{\hm}[3]{{\rm Hom}_{#1}(#2,#3)}
\newcommand{\hmf}[2]{{\sf Hom}(#1,#2)}
\newcommand{\fun}{\mathbbm{F}_1}
\newcommand{\M}{\mathbbm{M}}
\newcommand{\xm}{X_{\mathbbm{M}}}
\newcommand{\tm}{t_{\mathbbm{M}}}
\newcommand{\xs}{X_{\sig}}
\newcommand{\ts}{t_{\sig}}
\newcommand{\snf}{\renewcommand{\thefootnote}{*}\footnotetext{The author was supported by the Swiss National Science Foundation.}}
\begin{document}

\title{The geometry of toric schemes\protect\snf}
\author{Fred Rohrer}
\address{Universit\"at T\"ubingen, Fachbereich Mathematik, Auf der Morgenstelle 10, 72076 T\"u\-bingen, Germany}
\email{rohrer@mail.mathematik.uni-tuebingen.de}
\subjclass[2010]{Primary 14M25; Secondary 20M25.}
\keywords{Toric scheme, toric variety, algebra of monoid}

\begin{abstract}
Geometric properties of schemes obtained by gluing algebras of monoids, including separation and finiteness properties, irreducibility, normality, catenarity, dimension, and Serre's properties $(S_k)$ and $(R_k)$, are investigated. This is used to show how the geometry of a toric scheme over an arbitrary base is influenced by the geometry of the base.
\end{abstract}

\maketitle


\section*{Introduction}

During the last forty years, the theory of toric varieties was generalised in several directions. But the generalisation that seems to be the most natural and the most important -- the \textit{study of toric varieties from a scheme-theoretical point of view} -- was never actually carried out, although it is necessary for attacking fundamental questions such as whether or not the Hilbert scheme of a toric variety exists. It is clear that to do this one has to be able to make arbitrary base changes. Hence, instead of considering toric varieties over an algebraically closed field $k$ one needs to study \textit{toric schemes,} i.e., ``toric varieties over arbitrary base schemes''. To this end, also starting with a fan $\sig$, we perform the same construction as for toric varieties but replace throughout $k$ by an arbitrary ring $R$; this yields an $R$-scheme, and by gluing we can generalise this to an arbitrary base scheme $S$. So, we end up with an $S$-scheme $\ts(S)\colon\xs(S)\rightarrow S$, called \textit{the toric scheme over $S$ associated with $\sig$.} Its construction is functorial in $S$ and compatible with base change. While $\xs(S)$ still shares a lot of nice properties, being flat and of finite presentation over $S$, it can be as ugly as its base $S$. For example (and supposing $\sig\neq\emptyset$), we will see below that it is separated, irreducible, normal, universally catenary, or Cohen-Macaulay if and only if $S$ is so, and that even if $\sig$ is regular then $\xs(S)$ is regular if and only if $S$ is so. In particular, the Weil divisor techniques that are often used on toric varieties are in general \textit{not available on a toric scheme.}

Toric schemes in this sense were mentioned briefly in \cite{dem} (for regular fans and mainly in case $R=\Z$), but besides this, no literature about toric schemes seems to be available. (The equivalent description of toric varieties in terms of torus operations in the classical case was generalised in \cite{kkms} to base rings that are discrete valuation rings. However, already there the two descriptions -- via fans and via torus operations -- are no longer equivalent. For details about this divergence and also the case of more general valuation rings we refer the reader to the recent preprint \cite{gubler} by Gubler.)

\smallskip

For greater clarity, a generalisation of the above construction is described in this article. Namely, in Section \ref{sec2} we study functorial properties of algebras of monoids and introduce a general construction that turns a certain projective system $\M$ of monoids into an $S$-scheme $t_{\M}(S)\colon X_{\M}(S)\rightarrow S$ for every scheme $S$. Toric schemes will be defined as a special case of this construction, which is moreover related to $\fun$-schemes in the sense of Deitmar (cf.~\ref{f1}). In Section \ref{sec3} we show that $t_{\M}(S)$ has certain properties and apply this to see how the geometry of $S$ influences the geometry of $\xm(S)$ and vice versa. We define toric schemes in Section \ref{sec5} and apply the results from Section \ref{sec3} to obtain a first description of the geometry of toric schemes, before we extend it with results about regularity specific to toric schemes.

Our results point out where the geometry of toric schemes differs from the geometry of toric varieties, and hence give a first glimpse at the difficulties that are met when studying toric schemes. Still, it is the author's hope that this article will help to extend the ``remarkably fertile testing ground for general theories'' provided by toric varieties (as Fulton puts it in \cite{ful}) from the narrow setting of varieties to the more natural setting of schemes.

\smallskip

The material in this article is part of the author's Dissertation \cite{diss} (available on his homepage), to which the reader is referred for more details.

\smallskip

\textit{Notations and conventions:} In general we use the terminology of Bourbaki's \textit{\'El\'ements de math\'ematique} and Grothendieck's \textit{\'El\'ements de g\'eom\'etrie alg\'ebrique}. Monoids are understood to be additively written and commutative, rings are understood to be commutative, and algebras are understood to be commutative, unital and associative. We use the categories $\hmf{\C}{\D}$ (functors from $\C$ to $\D$ for categories $\C$ and $\D$), $\C_{/S}$ and $\C^{/S}$ (objects in $\C$ over or under $S$, respectively, for a category $\C$ and $S\in\ob(\C)$), $\mon$ (monoids), $\ann$ (rings), $\alg(R)$ ($R$-algebras for a ring $R$), $\sch$ (schemes), and $\sch_{/R}\dfgl\sch_{/\spec(R)}$ (for a ring $R$).


\section{Gluing algebras of monoids}\label{sec2}

Functors of algebras of monoids are the objects of study in this section. We briefly review their definition and some base change and further commutation results, in particular with respect to formation of rings of fractions. Then, a general construction of schemes from certain projective systems of monoids, dubbed \textit{openly immersive,} is treated; toric schemes will be obtained as a special case of this in Section \ref{sec5}. The reader will notice similarities to $\fun$-schemes in the sense of Deitmar (\cite{deitmar1}), and we will make this connection more precise at the end of this section.

\begin{no}\label{alg20}
Let $R$ be a ring. The forgetful functor $\alg(R)\rightarrow\mon$ mapping an $R$-algebra onto its underlying multiplicative monoid has a left adjoint that maps a monoid $M$ onto its algebra over $R$, denoted by $s(R,M)\colon R\rightarrow R[M]$. This is also functorial in $R$, hence we get a diagram of categories $$\xymatrix@R3pt{&\ar@{=>}@<-1.5ex>[dd]^s&\\\ann\times\mon\ar@/^3ex/[rr]^{\pr_1}\ar@/_3ex/[rr]_{\bullet[\lsq]}&&\ann.\\&&}$$ Composition with $\spec\colon\ann^{\circ}\rightarrow\sch$ and setting $t\dfgl\spec\circ s$ yields a diagram $$\xymatrix@R3pt{&&\\\ann^{\circ}\times\mon^{\circ}\ar@/^3ex/[rr]^{\spec\circ\pr_1}\ar@/_3ex/[rr]_{\spec(\bullet[\lsq])}&&\sch.\\&\ar@{=>}@<1.5ex>[uu]_t&}$$ For a monoid $M$, the $R$-module underlying $R[M]$ is the free $R$-module with basis the set underlying $M$, yielding a map $\exp_{R,M}\colon M\rightarrow R[M]$; if no confusion can arise we set $e_m\dfgl\exp_{R,M}(m)$ for $m\in M$.
\end{no}

\begin{no}\label{alg40}
Let $F\in\ob(\hmf{\ann}{\ann}^{/\Id_{\ann}})$. For a ring $R$ and a monoid $M$, the maps $$M\rightarrow\hm{R}{F(R)}{F(R)[M]},\;m\mapsto(x\mapsto xe_m)$$ and $$M\rightarrow R[M]\otimes_RF(R),\;m\mapsto e_m\otimes 1$$ induce mutually inverse morphisms in $\alg(F(R))$ between $R[M]\otimes_RF(R)$ and $F(R)[M]$ that are natural in $R$ and $M$. So, there is a canonical isomorphism $$\bullet[\sq]\otimes_{\bullet}F(\bullet)\cong F(\bullet)[\sq]$$ in $\hmf{\ann\times\mon}{\ann}^{/F\circ\pr_1}$. In particular, for a ring $R$ and an $R$-algebra $R'$ there is a canonical isomorphism $$R[\sq]\otimes_RR'\cong R'[\sq]$$ in $\hmf{\mon}{\alg(R')}$. Furthermore, as $R[\sq]$ commutes with coproducts the above yields for $k\in\N$ a canonical isomorphism $$\bullet\bigl[\bigoplus_{i=1}^k\sq_i\bigr]\cong\bigotimes_{i=1}^k\!{}_{{}_{\bullet}}\bullet[\sq_i]\cong\bullet[\sq_1]\cdots[\sq_k]$$ in $\hmf{\ann\times\mon^k}{\ann}^{/\pr_1}$.
\end{no}

\begin{no}\label{alg10}
Let $M$ be a monoid. For $T\subseteq M$ we denote by $M-T$ the monoid of differences of $M$ with negatives in $T$ and by $\eps_T$ the canonical epimorphism $M\rightarrow M-T$; if $T=\{t\}$ then we write $M-t$ and $\eps_t$ instead of $M-T$ and $\eps_T$. If $T'$ is the submonoid of $M$ generated by $T$ then $M-T=M-T'$. If $T=M$ then $\diff(M)\dfgl M-T$ is a group, called the group of differences of $M$. An element $m\in M$ is called \textit{cancellable} if $m+k=m+l$ implies $k=l$ for $k,l\in M$, and $M$ is called \textit{cancellable} if every element of $M$ is cancellable. Furthermore, $M$ is called \textit{torsionfree} if $rm=rn$ implies $m=n$ for $m,n\in M$ and $r\in\N^*$, and \textit{integrally closed} if it is cancellable and $rm\in M$ implies $m\in M$ for $m\in\diff(M)$ and $r\in\N^*$. If $T\subseteq M$, then $\eps_T$ is a monomorphism if and only if every $m\in T$ is cancellable. Hence, $M$ is cancellable if and only if $\eps_M$ is a monomorphism, and then we consider $M$ as a submonoid of $\diff(M)$. If $M$ is cancellable, then it is torsionfree if and only if $\diff(M)$ is so.
\end{no}

\begin{no}\label{alg80}
Let $R$ be a ring, and let $M$ be a monoid. If $T\subseteq M$ is a subset then the $R[M]$-algebras\footnote{For a subset $S\subseteq R$ we denote by $S^{-1}R$ the ring of fractions of $R$ with denominators in $S$ and by $\eta_S$ the canonical morphism $R\rightarrow S^{-1}R$; if $S=\{s\}$ then we write $R_s$ and $\eta_s$ instead of $S^{-1}R$ and $\eta_S$.} $$R[\eps_T]\colon R[M]\rightarrow R[M-T]\;\text{ and }\;\eta_{\exp_{R,M}(T)}\colon R[M]\rightarrow\exp_{R,M}(T)^{-1}(R[M])$$ are canonically isomorphic. So, if $t\in M$ then the $R[M]$-algebras $$R[\eps_t]\colon R[M]\rightarrow R[M-t]\;\text{ and }\;\eta_{e_t}\colon R[M]\rightarrow R[M]_{e_t}$$ are isomorphic, and thus $$\spec(\bullet[\eps_t])\colon\spec(\bullet[M-t])\rightarrow\spec(\bullet[M])$$ is an open immersion\footnote{By abuse of language, a morphism in $\hmf{\alg(R)^{\circ}}{\sch_{/R}}$ is called an (\textit{open} or \textit{closed}) \textit{immersion} if its values are so.} in $\hmf{\ann^{\circ}}{\sch}$ (\cite[I.1.6.6]{ega}).

Now, let $S\subseteq R[M]$ be a subset, and set $T\dfgl S\cap R$ and $U\dfgl\eta_T[M](S)$. By \ref{alg40} there are unique morphisms $f,g,h$ of rings yielding commutative diagrams $$\xymatrix@R16pt@C50pt{R\ar[r]^(.45){\eta_T}\ar[d]_{s(R,M)}&T^{-1}R\ar[r]^(.45){s(T^{-1}R,M)}\ar[d]^f&(T^{-1}R)[M]\ar[d]^{\eta_U}\\R[M]\ar[r]^(.4){\eta_S}&S^{-1}(R[M])&U^{-1}((T^{-1}R)[M])\ar[l]_(.55)g}$$ and $$\xymatrix@R16pt@C50pt{R\ar[r]^(.45){s(R,M)}\ar[d]_{\eta_T}&R[M]\ar[r]^(.45){\eta_S}\ar[d]^{\eta_T[M]}&S^{-1}(R[M])\ar[d]^h\\T^{-1}R\ar[r]^(.41){s(T^{-1}R,M)}&(T^{-1}R)[M]\ar[r]^(.43){\eta_U}&U^{-1}((T^{-1}R)[M])}$$ in $\ann$. As $g$ and $h$ are mutually inverse we see that the $R[M]$-algebras $$\eta_S\colon R[M]\rightarrow S^{-1}(R[M])\;\text{ and }\;\eta_U\circ\eta_T[M]\colon R[M]\rightarrow U^{-1}((T^{-1}R)[M])$$ are canonically isomorphic.
\end{no}

\begin{no}
Let $I$ be a category. On use of the canonical isomorphism $$\hmf{I^{\circ}}{\ann\times\mon}\cong\hmf{I^{\circ}}{\ann}\times\hmf{I^{\circ}}{\mon}$$ we obtain from \ref{alg20} a diagram $$\xymatrix@R5pt@C60pt{&\ar@{=>}@<-4ex>[dd]^{\hmf{I^{\circ}}{s}}&\\\hmf{I^{\circ}}{\ann}\times\hmf{I^{\circ}}{\mon}\ar@/^4ex/[rr]^{\pr_1}\ar@/_4ex/[rr]_{\hmf{I^{\circ}}{\bullet[\lsq]}}&&\hmf{I^{\circ}}{\ann}.\\&&}$$ If no confusion can arise we write $\bullet[\sq]$ instead of $\hmf{I^{\circ}}{\bullet[\sq]}$ and $s$ instead of $\hmf{I^{\circ}}{s}$. Denoting by $c\colon\ann\rightarrow\hmf{I^{\circ}}{\ann}$ the functor that maps a ring $R$ onto the constant functor with value $R$ and setting $c'\dfgl c\times\Id_{\hmf{I^{\circ}}{\mon}}$ we get a diagram $$\xymatrix@R5pt@C60pt{&\ar@{=>}@<-1.5ex>[dd]^{s\circ c'}&\\\ann\times\hmf{I^{\circ}}{\mon}\ar@/^4ex/[rr]^{c\circ\pr_1}\ar@/_4ex/[rr]_{\bullet[\lsq]\circ c'}&&\hmf{I^{\circ}}{\ann}.\\&&}$$ Denoting by $d\colon\sch\rightarrow\hmf{I}{\sch}$ the functor that maps a scheme $S$ onto the constant functor with value $S$ we end up with a diagram $$\xymatrix@R5pt@C60pt{&&\\\ann^{\circ}\times\hmf{I^{\circ}}{\mon}^{\circ}\ar@/^4ex/[rr]^{d\circ\spec\circ\pr_1}\ar@/_4ex/[rr]_{\spec(\bullet[\lsq])\circ c'}&&\hmf{I}{\sch}.\\&\ar@{=>}@<1.5ex>[uu]_{\hmf{I^{\circ}}{t}\circ c'}&}$$  If no confusion can arise we write $t$ instead of $\hmf{I^{\circ}}{t}$.
\end{no}

\begin{no}\label{alg130}
Let $I$ be a preordered set and let $\M=((M_i)_{i\in I},(p_{ij})_{i\leq j})$ be a projective system in $\mon$ over $I$. For $i\in I$ we set $$X_{\M,i}(\bullet)\dfgl\spec(\bullet[M_i])\colon\ann^{\circ}\rightarrow\sch$$ and $$t_{\M,i}(\bullet)\dfgl t(\bullet,M_i)\colon X_{\M,i}(\bullet)\rightarrow\spec(\bullet),$$ and for $i,j\in I$ with $i\leq j$ we set $$\iota_{\M,i,j}(\bullet)\dfgl\spec(\bullet[p_{ij}])\colon X_{\M,i}(\bullet)\rightarrow X_{\M,j}(\bullet).$$ If $R$ is a ring then we say that $\M$ is \textit{openly immersive for $R$} if $\iota_{\M,i,j}(R)$ is an open immersion for all $i,j\in I$ with $i\leq j$, and then it is openly immersive for every $R$-algebra (\ref{alg40}, \cite[I.4.3.6]{ega}). We say that $\M$ is \textit{openly immersive} if it is openly immersive for $\Z$, or -- equivalently -- for every ring.

Now, let $R$ be a ring, and suppose that $I$ is a lower semilattice and that $\M$ is openly immersive for $R$. Then, \cite[I.2.4.1; 0.4.1.7]{ega} implies that the inductive system $$\bigl(\bigl(X_{\M,i}(R)\xrightarrow{t_{\M,i}(R)}\spec(R)\bigr)_{i\in I},(\iota_{\M,i,j}(R))_{i\leq j}\bigr)$$ in $\sch_{/R}$ has an inductive limit $$\tm(R)\colon\xm(R)\rightarrow\spec(R)$$ such that for every $i\in I$ the canonical morphism $\iota_{\M,i}(R)\colon X_{\M,i}(R)\rightarrow \xm(R)$ is an open immersion by means of which we consider $X_{\M,i}(R)$ as an open subscheme of $\xm(R)$. The inductive limit $\xm(R)$ can be understood as obtained by gluing the family $(X_{\M,i}(R))_{i\in I}$ along $(X_{\M,\inf(i,j)}(R))_{(i,j)\in I^2}$. In particular, $(X_{\M,i}(R))_{i\in I}$ is an affine open covering of $\xm(R)$ whose image is closed under nonempty, finite intersections.

The above gives rise to a functor $$\xm\colon\alg(R)^{\circ}\rightarrow\sch_{/R}$$ over $\spec$ together with open immersions $\iota_{\M,i}\colon X_{\M,i}\rightarrow \xm$. For an $R$-algebra $R'$ the diagram of categories $$\xymatrix@R15pt{\alg(R')\ar[r]^{\xm}&\sch_{/R'}\\\alg(R)\ar[r]^{\xm}\ar[u]^{\bullet\otimes_RR'}&\sch_{/R}\ar[u]_{\bullet\times_RR'}}$$ commutes up to canonical isomorphism (\ref{alg40}). So, defining a functor $$\xm\colon\sch\rightarrow\sch$$ over $\Id_{\sch}$ (with structural morphism denoted by $\tm$) by setting $$\xm(\bullet)\dfgl \xm(\Z)\times\bullet,$$ it follows that $\xm(\spec(R))$ and $\xm(R)$ for a ring $R$ are canonically isomorphic and will henceforth be identified, and that we have an open covering $(X_{\M,i})_{i\in I}$ of $\xm$. If $S$ is a scheme and $(U_i)_{i\in I}$ is an open covering of $S$, then the canonical injections $U_i\hookrightarrow S$ induce open immersions $\xm(U_i)\rightarrowtail \xm(S)$ by means of which we consider the $\xm(U_i)$ as open subschemes of $\xm(S)$, and it follows that $(\xm(U_i))_{i\in I}$ is an open covering of $\xm(S)$.

If $S$ is a scheme and $i\in I$ then the codiagonal of the free $\Z$-module $\Z[M_i]$ is a retraction of the canonical injection $\Z\hookrightarrow\Z[M_i]$ and in particular a surjective morphism in $\ann$, hence it induces by base change a closed immersion $S\rightarrowtail X_{\M,i}(S)$ that is a section of $t_{\M,i}(S)$. So, if $I\neq\emptyset$ we get an immersion $S\rightarrowtail \xm(S)$ that is a section of $\tm(S)$, called canonical.
\end{no}

\begin{exa}\label{alg150}	
If $I$ is a lower semilattice and $\M=((M_i)_{i\in I},(p_{ij})_{i\leq j})$ is a projective system in $\mon$ over $I$ such that for $i,j\in I$ with $i\leq j$ there is a $t_{ij}\in M_j$ with $M_i=M_j-t_{ij}$ and $p_{ij}=\eps_{t_{ij}}$, then $\M$ is openly immersive (\ref{alg80}) and hence gives rise to a functor $\xm\colon\sch\rightarrow\sch$ over $\Id_{\sch}$.
\end{exa}

\begin{no}\label{f1}
The above has some connections with the ``geometry over the field with one element''. There, one is mainly concerned with what happens ``below $\Z$'', while we want to know what happens ``above $\Z$'', i.e., after changing the base to an arbitrary ring. This question has seemingly not yet been studied much, although a few related observations for $\fun$-schemes can be found in \cite{chww} and \cite{deitmar2}.

We try now to shed light upon the aforementioned connections. There are different approaches to $\fun$ -- see \cite{map} for an overview. The one that seems closest to our theory is Deitmar's (\cite{deitmar1}), which is equivalent to the approach by To\"en-Vaqui\'e (\cite{tv}, \cite{vezzani}). In \cite{deitmar1}, an $\fun$-scheme is defined as a topological space furnished with a sheaf of monoids that is locally isomorphic to spectra of monoids.\footnote{In more recent works on Deitmar's approach as well as in the more general approaches by Connes and Consani (\cite{chu}, \cite{cc1}, \cite{cc3}, \cite{cc2}), monoids are replaced by pointed monoids, but since ascent to $\Z$ includes factoring out the absorbing element this difference need not bother us.}

Let $\mathbbm{M}=((M_i)_{i\in I},(p_{ij})_{i\leq j})$ be an openly immersive projective system in $\mon$ over a lower semilattice $I$. Taking spectra this yields an inductive system $$\bigl((\spec(M_i))_{i\in I},(\spec(p_{ij}))_{i\leq j}\bigr)$$ of (affine) $\fun$-schemes. If the morphisms $\spec(p_{ij})$ are open immersions, then this inductive system has an inductive limit $Z_{\mathbbm{M}}$ that can be understood as the $\fun$-scheme obtained by gluing $(\spec(M_i))_{i\in I}$ along $(\spec(M_{\inf(i,j)}))_{i,j\in I^2}$, and ascent to $\Z$ maps this $\fun$-scheme to $X_{\mathbbm{M}}(\Z)$, i.e., $$Z_{\mathbbm{M}}\otimes_{\fun}\Z\cong X_{\mathbbm{M}}(\Z)$$ (\cite{deitmar1}, cf.~\cite[Proposition 42]{vezzani}). So, we need to know whether or not the morphisms $\spec(p_{ij})$ are open immersions of $\fun$-schemes. By \cite[Theorem 30]{vezzani} this holds if and only if $p_{ij}$ is isomorphic to the canonical morphism of a monoid of differences obtained by inverting a single element. Therefore, we can construct $Z_{\mathbbm{M}}$ as above if and only if $\mathbbm{M}$ is of the form described in \ref{alg150}.

Conversely, we may ask whether every $\fun$-scheme is of the form $Z_{\mathbbm{M}}$ for an openly immersive projective system $\mathbbm{M}$ as in \ref{alg150}. At present the author is ignorant of an answer to this. There is an obvious notion of quasiseparatedness for $\fun$-schemes such that the analogue of \cite[I.6.1.12]{ega} holds, and $\fun$-schemes of the form $Z_{\mathbbm{M}}$ are clearly quasiseparated. So, it would be interesting to know whether $\fun$-schemes are always quasiseparated. (Note that they are not necessarily separated by \cite[3.5]{chww}.)
\end{no}


\section{Geometric properties}\label{sec3}

\noindent\textit{Throughout this section let $I$ be a lower semilattice, let\/ $\M=((M_i)_{i\in I},(p_{ij})_{i\leq j})$ be an openly immersive projective system in\/ $\mon$ over $I$, let $r\dfgl\sup_{i\in I}(\rk(\diff(M_i)))$, and let $S$ be a scheme. By abuse of language, we call\/ $\M$ cancellable, torsionfree, integrally closed, of finite type, finite, or zero if $M_i$ is so for every $i\in I$.}\smallskip

We start by treating separation, flatness and finiteness properties.

\begin{prop}\label{n10}
a) $\tm(S)$ is quasiseparated; it is separated if and only if\linebreak $M_{\inf(i,j)}$ is generated by $p_{\inf(i,j)i}(M_i)\cup p_{\inf(i,j)j}(M_j)$ for $i,j\in I$ or $S=\emptyset$.

b) If $I$ is finite then $\tm(S)$ is quasicompact.

c) $\tm(S)$ is flat; it is faithfully flat if and only if $I\neq\emptyset$ or $S=\emptyset$.
\end{prop}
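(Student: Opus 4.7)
The plan is to leverage the gluing description of $\xm(S)$ established in \ref{alg130}, reducing every part to the case of an affine base $S = \spec(R)$. All four properties under consideration are local on the base, so this reduction is harmless: for an open $U \subseteq S$ we have $\tm(S)^{-1}(U) \cong \xm(U)$ as $U$-schemes, and it suffices to verify the claims on an affine open cover.

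For (a), the key observation is that $\xm(\spec R)$ carries the affine open covering $(X_{\M,i}(\spec R))_{i \in I}$ whose pairwise intersections $X_{\M,\inf(i,j)}(\spec R)$ are again affine (hence quasi-compact). This immediately yields quasi-separatedness of $\tm(\spec R)$, and thus of $\tm(S)$ in general. For separatedness I would invoke the standard gluing criterion: $\xm(\spec R) \to \spec R$ is separated iff every diagonal morphism $X_{\M,\inf(i,j)}(\spec R) \to X_{\M,i}(\spec R) \times_{\spec R} X_{\M,j}(\spec R)$ is a closed immersion. Dualising via \ref{alg40}, this reduces to asking whether the ring homomorphism
\[ R[M_i] \otimes_R R[M_j] \;\cong\; R[M_i \oplus M_j] \longrightarrow R[M_{\inf(i,j)}] \]
induced by $(p_{\inf(i,j)i}, p_{\inf(i,j)j})$ is surjective. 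Since $R[M_{\inf(i,j)}]$ is the free $R$-module on $M_{\inf(i,j)}$, surjectivity here is equivalent to surjectivity of the underlying monoid morphism $M_i \oplus M_j \to M_{\inf(i,j)}$, which is precisely the stated generation condition. The case $S = \emptyset$ is trivial; to pass back from affine to general non-empty $S$, note that the monoid condition does not depend on $R$, and conversely that if $\tm(S)$ is separated with $S \neq \emptyset$, a base change to the residue field of a point of $S$ preserves separatedness and produces a non-trivial affine case, forcing the monoid condition.

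Parts (b) and (c) are essentially formal. For (b), when $I$ is finite and $S = \spec R$, $\xm(S)$ is a finite union of affine opens and hence quasi-compact; localising on $S$ settles the general case. For (c), flatness is local on the source, and $R[M_i]$ is a free $R$-module, so each $X_{\M,i}(\spec R) \to \spec R$ is flat; hence $\tm(S)$ is flat for every $S$. Faithful flatness requires additionally surjectivity of $\tm(S)$: if $I = \emptyset$ then $\xm(S) = \emptyset$ by construction, so surjectivity forces $S = \emptyset$; if $I \neq \emptyset$, the canonical section $S \rightarrowtail \xm(S)$ described at the end of \ref{alg130} is a right inverse of $\tm(S)$ and therefore witnesses surjectivity.

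The only step that deserves genuine attention is the separatedness equivalence in (a): the translation between surjectivity at the ring level and at the monoid level rests on the freeness of $R[M]$ on $M$, and both directions of the equivalence for non-affine $S$ require the small base-change argument sketched above. Everything else is bookkeeping with the gluing data.
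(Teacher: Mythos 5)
Your proposal is correct and follows essentially the same route as the paper: reduce to an affine base, read off quasi-separatedness, quasi-compactness and flatness from the affine open covering $(X_{\M,i})_{i\in I}$ with affine intersections $X_{\M,\inf(i,j)}$, and characterise separatedness via surjectivity of $R[M_i]\otimes_R R[M_j]\rightarrow R[M_{\inf(i,j)}]$, which freeness of $R[M]$ on $M$ translates into the stated generation condition. The only cosmetic difference is that the paper performs the computations over $\Z$ and invokes base change, whereas you work directly over an arbitrary affine base and spell out the separatedness criterion that the paper dismisses as ``clear''; both are sound.
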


\begin{proof}
Considering its covering $(X_{\M,i}(\Z))_{i\in I}$ we see that $\xm(\Z)$ is quasiseparated and quasicompact, yielding the first claim in a) and b) by base change (\cite[I.6.1.12; I.6.1.10; I.6.1.9; I.6.1.2; I.6.1.5]{ega}). The second claim in a) is clear if $S$ is affine, hence holds in general by base change (\cite[I.5.3.9; I.5.3.1]{ega}). If $i\in I$ then the $\Z$-module underlying $\Z[M_i]$ is free and thus faithfully flat (\cite[I.3.1 Exemple 2]{ac}), hence $t_{\M,i}(\Z)$ is faithfully flat (\cite[IV.2.2.3]{ega}). Therefore, $\tm(\Z)$ is flat (and faithfully flat if $I\neq\emptyset$), hence c) follows by base change (\cite[IV.2.1.4; IV.2.2.13]{ega}).
\end{proof}

\begin{cor}\label{n130}
a) $\xm(S)$ is quasiseparated if and only if $S$ is quasiseparated or $I=\emptyset$.

b) If $\xm(S)$ is separated then $S$ is separated or $I=\emptyset$; if $M_{\inf(i,j)}$ is generated by $p_{\inf(i,j)i}(M_i)\cup p_{\inf(i,j)j}(M_j)$ for $i,j\in I$ or $S=\emptyset$ then the converse holds.

c) If $\xm(S)$ is quasicompact then $S$ is quasicompact or $I=\emptyset$; if $I$ is finite then the converse holds.
\end{cor}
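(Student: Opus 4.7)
The plan is to combine Proposition \ref{n10} with the canonical immersion $s\colon S\rightarrowtail\xm(S)$ from the last paragraph of \ref{alg130}, which is a section of $\tm(S)$. Before addressing the three parts, I dispose of the degenerate cases $I=\emptyset$ or $S=\emptyset$ at once by noting that $\xm(S)$ is then empty (the inductive limit over an empty system is initial in $\sch_{/\Z}$, and $\xm(\bullet)=\xm(\Z)\times\bullet$ by definition), so every statement is trivial. Throughout I assume $I\neq\emptyset$ and $S\neq\emptyset$.

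Parts (a) and (b) follow the same pattern. For the ``forward'' direction (property of $S$ implies the same property of $\xm(S)$), Proposition \ref{n10}(a) gives that $\tm(S)$ is quasiseparated, and under the additional hypothesis on the $M_{\inf(i,j)}$ in (b) even separated; composing with the structural morphism $S\to\spec(\Z)$, which is quasiseparated or separated by assumption, yields the desired property of $\xm(S)\to\spec(\Z)$. For the ``backward'' direction, the canonical section $s$ is an immersion, hence a monomorphism, and so is both quasiseparated and separated as a morphism; composing $s$ with $\xm(S)\to\spec(\Z)$ then transports quasiseparatedness or separatedness from $\xm(S)$ down to $S$.

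Part (c) splits in a different way. For ``$\xm(S)$ quasicompact implies $S$ quasicompact'', I appeal to Proposition \ref{n10}(c): since $I\neq\emptyset$, $\tm(S)$ is faithfully flat and in particular surjective, so $S$ is the continuous image of a quasicompact space. For the converse, assuming $I$ finite and $S$ quasicompact, I cover $S$ by finitely many affine opens $U_\alpha$; by \ref{alg130} the family $(\xm(U_\alpha))$ is an open covering of $\xm(S)$, and Proposition \ref{n10}(b) makes each $\tm(U_\alpha)$ quasicompact, so each $\xm(U_\alpha)$ is quasicompact over the affine base $U_\alpha$. Hence $\xm(S)$ is a finite union of quasicompact opens, and is itself quasicompact.

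The argument is essentially a tree of case-splits feeding into standard stability facts for classes of morphisms (composition, base change, monomorphisms being separated and quasiseparated). I do not anticipate a genuine obstacle; the one place requiring discipline is dispatching the cases $I=\emptyset$ and $S=\emptyset$ first, since only outside of them are the section $s$ and the faithful flatness of $\tm(S)$ actually available.
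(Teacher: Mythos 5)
Your proof is correct and follows essentially the same route as the paper: the canonical immersion $S\rightarrowtail\xm(S)$ handles the descent of (quasi-)separatedness to $S$, Proposition \ref{n10} a), b) handles the ascent (by composing $\tm(S)$ with the structural morphism of $S$), and faithful flatness from \ref{n10} c) gives surjectivity for the quasicompactness descent. Your explicit dispatch of the degenerate cases $I=\emptyset$ and $S=\emptyset$ is a harmless (and slightly more careful) elaboration of what the paper leaves implicit.
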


\begin{proof}
Considering the canonical immersion $S\rightarrowtail\xm(S)$ we see that (quasi-)sepa\-ratedness of $\xm(S)$ implies (quasi-)separatedness of $S$, while the converse statements follow from \ref{n10} a) (\cite[I.6.1.9; I.5.3.1]{ega}). Quasicompactness of $\xm(S)$ implies quasicompactness of $S$ by \ref{n10} c), while the converse follows from \ref{n10} b) (\cite[I.6.1.6; I.6.1.5]{ega}).
\end{proof}

\begin{prop}\label{n60}
Consider the following statements: (1)\/ $\M$ is of finite type or $S=\emptyset$; (2) $\tm(S)$ is locally of finite presentation; (3) $\tm(S)$ is locally of finite type; (4) $\tm(S)$ is of finite presentation; (5) $\tm(S)$ is of finite type. Then, (1)--(3) are equivalent; if $I$ is finite then (1)--(5) are equivalent.
\end{prop}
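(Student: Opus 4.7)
The plan is to verify (1)$\Rightarrow$(2)$\Rightarrow$(3)$\Rightarrow$(1) in general, and then to deduce (4) and (5) under the assumption that $I$ is finite by combining these with the quasicompactness and quasiseparatedness results of \ref{n10}.

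For (1)$\Rightarrow$(2) I would first dispose of the trivial case $S=\emptyset$, and then, assuming that every $M_i$ is finitely generated, observe that since $\Z$ is noetherian, $\Z[M_i]$ is a finitely presented $\Z$-algebra for each $i\in I$. Thus $t_{\M,i}(\Z)$ is of finite presentation, and base-changing it to $S$ preserves this. Since $(X_{\M,i}(S))_{i\in I}$ is an open covering of $\xm(S)$ and being locally of finite presentation is local on the source, $\tm(S)$ is locally of finite presentation. The implication (2)$\Rightarrow$(3) is immediate from the definitions.

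The key step is (3)$\Rightarrow$(1), which is where the main difficulty lies. Assume $\tm(S)$ is locally of finite type and $S\neq\emptyset$, and pick $s\in S$ and $i\in I$. Since $X_{\M,i}(S)$ is open in $\xm(S)$ the restriction $t_{\M,i}(S)$ is also locally of finite type, and base-changing to $\spec(\kappa(s))$ yields that $\spec(\kappa(s)[M_i])\rightarrow\spec(\kappa(s))$ is locally of finite type. Both being affine, this means $\kappa(s)[M_i]$ is a finitely generated $\kappa(s)$-algebra. Collecting the finitely many basis vectors $e_m$ that appear in a chosen set of algebra generators, one obtains a finite subset $T\subseteq M_i$ such that $\{e_m\mid m\in T\}$ generates $\kappa(s)[M_i]$ as a $\kappa(s)$-algebra. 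For arbitrary $m\in M_i$ the element $e_m$ is then a $\kappa(s)$-linear combination of products $\prod_{m'\in T}e_{m'}^{\alpha_{m'}}=e_{\sum_{m'\in T}\alpha_{m'}m'}$; since $(e_{m''})_{m''\in M_i}$ is a $\kappa(s)$-basis of $\kappa(s)[M_i]$ by \ref{alg20}, comparing coefficients forces $m=\sum_{m'\in T}\alpha_{m'}m'$ for some tuple of nonnegative integers $\alpha_{m'}$. Hence $T$ generates $M_i$ as a monoid, yielding (1).

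For the addendum, assume $I$ is finite. By \ref{n10} a) the morphism $\tm(S)$ is quasiseparated, and by \ref{n10} b) it is also quasicompact, so locally of finite presentation is equivalent to of finite presentation, and locally of finite type is equivalent to of finite type. This gives (2)$\Leftrightarrow$(4) and (3)$\Leftrightarrow$(5), whence (1)--(5) are all equivalent. The main obstacle really is the passage from algebra generators to monoid generators in (3)$\Rightarrow$(1); this works only because $M_i$ sits inside $\kappa(s)[M_i]$ as a $\kappa(s)$-basis, so that the comparison of coefficients becomes a genuine statement about monoid elements. The remaining implications are essentially formal, exploiting the noetherianity of $\Z$, the base-change behaviour of finite presentation, and the local character of the properties involved.
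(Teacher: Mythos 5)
Your proof is correct and follows essentially the same route as the paper: base change from $\Z$ for (1)$\Rightarrow$(2), restriction to a residue field for (3)$\Rightarrow$(1), and quasicompactness plus quasiseparatedness from \ref{n10} a), b) for the addendum. The only difference is that you prove the key step --- that $\kappa(s)[M_i]$ of finite type over $\kappa(s)$ forces $M_i$ to be of finite type --- by an explicit (and correct) coefficient-comparison argument, where the paper simply cites \cite[7.7]{gilmer}.
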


\begin{proof}
If $\M$ is of finite type then $t_{\M,i}(\Z)$ is of finite presentation for every $i\in I$, hence $\tm(\Z)$ is locally of finite presentation, and thus $\tm(S)$ is locally of finite presentation (\cite[I.6.2.5; I.6.2.1.2; I.6.2.6]{ega}); so, (1) implies (2). If $\tm(S)$ is locally of finite type and $S\neq\emptyset$ then there exists a field $K$ such that $\tm(K)$ is locally of finite type, hence if $i\in I$ then the $K$-algebra $K[M_i]$ is of finite type, and thus $M_i$ is of finite type (\cite[I.6.2.3; I.6.2.5]{ega}, \cite[7.7]{gilmer}); so, (3) implies (1). The claim follows now from \ref{n10} a), b).
\end{proof}

Next we give conditions for several properties related to noetherianness and the property of being jacobsonian to be respected and reflected by $\xm$. We call a scheme topologically (locally) noetherian if its underlying topological space is (locally) noetherian, and pointwise noetherian if the stalks of its structure sheaf are noetherian.

\begin{prop}\label{n70}
a) $\xm(S)$ is locally noetherian if and only if $S$ is locally noetherian and\/ $\M$ is of finite type, or $I=\emptyset$.

b) If $\xm(S)$ is noetherian then $S$ is noetherian and\/ $\M$ is of finite type, or $I=\emptyset$; if $I$ is finite then the converse holds.

c) If $\xm(S)$ is pointwise noetherian then $S$ is pointwise noetherian or $I=\emptyset$; if $\M$ is of finite type then the converse holds.

d) Consider the following statements: (1) $\xm(S)$ is topologically locally noetherian; (2) $S$ is topologically locally noetherian or $I=\emptyset$. If $I$ is finite then it holds (1)$\Rightarrow$(2); if\/ $\M$ is of finite type then it holds (2)$\Rightarrow$(1).

e) If $\xm(S)$ is topologically noetherian then $S$ is topologically noetherian or $I=\emptyset$; if $I$ is finite and\/ $\M$ is of finite type then the converse holds.

f) If $\xm(S)$ is a jacobsonian then $S$ is jacobsonian or $I=\emptyset$; if\/ $\M$ is of finite type then the converse holds.
\end{prop}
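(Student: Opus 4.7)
The plan is to treat the six parts uniformly by exploiting two complementary tools: the affine open covering $(X_{\M,i}(S))_{i\in I}$ of $\xm(S)$ (further refined by an affine open covering of $S$), and the canonical immersion $S\rightarrowtail\xm(S)$ from \ref{alg130}, which is a section of $\tm(S)$ whenever $I\neq\emptyset$. Each property under consideration is either local on the scheme (locally noetherian, pointwise noetherian, topologically locally noetherian, jacobsonian) or local plus quasicompactness, with the quasicompactness part handled via \ref{n10} b) and \ref{n130} c). The case $I=\emptyset$ is trivial throughout, as then $\xm(S)=\emptyset$.

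For part a), the sufficient direction reduces to the case $S=\spec(R)$ with $R$ noetherian; since $X_{\M,i}(S)=\spec(R[M_i])$, finite generation of $M_i$ together with Hilbert's basis theorem yield noetherianness of $R[M_i]$, so $\xm(S)$ is locally noetherian. For the converse, the canonical immersion forces $S$ to be locally noetherian. To show $\M$ is of finite type when $S\neq\emptyset$, I would pick $s\in S$ and base change to $\kappa(s)$: $\xm(\kappa(s))$ is locally noetherian, so $\kappa(s)[M_i]$ is a noetherian ring, and the Gilmer-type result already invoked in \ref{n60} (\cite[7.7]{gilmer}) forces $M_i$ to be finitely generated. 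Parts b) and e) then reduce to a) (and its topological analogue) by combining with the quasicompactness criterion: noetherianness is locally noetherianness plus quasicompactness, and the same splitting works on the underlying topological space.

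Part c) is handled by passing to stalks. For $x\in X_{\M,i}(S)\subseteq\xm(S)$ with image $s\in S$, the stalk $\O_{\xm(S),x}$ is a localization of $\O_{S,s}[M_i]$; noetherianness of $\O_{S,s}$ together with $M_i$ of finite type gives noetherianness of this localization. Conversely, via the canonical immersion each stalk $\O_{S,s}$ is a quotient of a stalk of $\xm(S)$, so inherits noetherianness with no finiteness hypothesis on $\M$ at all. Part d) mirrors a) at the level of the underlying topological space: the space of $\spec(R[M])$ is noetherian as soon as $R[M]$ is, and one combines this with \ref{n60} and finiteness of $I$.

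Part f) is the main obstacle. For the sufficient direction, assuming $S$ jacobsonian and $\M$ of finite type, I would cover $S$ by affine jacobsonian opens and invoke the standard result that a finitely generated algebra over a jacobsonian ring is jacobsonian (\cite[IV.10.4.6]{ega}), together with the fact that being jacobsonian is local on the scheme. For the necessary direction, the canonical section of $\tm(S)$ realises $S$ as a retract of $\xm(S)$, and being jacobsonian descends along such a retract: locally on affine charts one is reduced to the fact that if $R\rightarrow A\rightarrow R$ composes to the identity and $A$ is jacobsonian, then so is $R$. The delicate step I anticipate is verifying that this descent through the canonical immersion transmits the jacobsonian property cleanly without any noetherian hypothesis on $S$, and correctly handling the case that the image of the immersion is only locally closed.
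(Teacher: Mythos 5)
Most of your argument matches the paper's own proof: parts a), b), c) and f) use the same ingredients (the affine charts $\spec(R[M_i])$, Gilmer's criterion that $R[M_i]$ is noetherian iff $R$ is noetherian and $M_i$ is of finite type, the canonical immersion $S\rightarrowtail\xm(S)$ for the necessary directions, the description of stalks as localizations of $\mathscr{O}_{S,s}[M_i]$, and the general Nullstellensatz plus passage to locally closed subschemes for the jacobsonian property). Your variant of a), extracting finite generation of $M_i$ from the fibre $\xm(\kappa(s))$ rather than from an affine open of $S$, is fine, and your worry at the end of f) is resolved exactly as you suspect: jacobsonianness passes to locally closed subspaces (the paper cites EGA I.6.4.3), or, on charts, $R$ is a quotient of the jacobsonian ring $R[M_i]$ via the codiagonal.

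The genuine gap is in part d), direction (2)$\Rightarrow$(1), and hence also in the converse of e), which you reduce to it. Your justification --- ``the space of $\spec(R[M])$ is noetherian as soon as $R[M]$ is'' --- does not apply, because the hypothesis in (2) is only that $S$ is \emph{topologically} locally noetherian: the rings $R$ of an affine open covering of $S$ need not be noetherian (a valuation ring of finite rank, for instance, has noetherian spectrum without being noetherian), so you cannot invoke Hilbert's basis theorem to get noetherianness of $R[M_i]$, and noetherianness of the ring is not available as an intermediate step. What is actually needed is the statement that a finitely generated algebra over a ring with noetherian spectrum again has noetherian spectrum; this is a theorem of Ohm and Pendleton, and it is precisely what the paper cites (\cite[2.6]{ohm-pen}) at this point. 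Without that input (or an equivalent argument about radical ideals of $R[x]$ over a ring with noetherian prime spectrum), your sketch of (2)$\Rightarrow$(1) in d) does not go through; once it is supplied, your reduction of e) to d) via quasicompactness is correct.
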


\begin{proof}
Suppose that $\xm(S)$ is locally noetherian and $I\neq\emptyset$. If $U=\spec(R)$ is an affine open subscheme of $S$ and $i\in I$ then $X_{\M,i}(U)$ is an open subscheme of $\xm(S)$, hence locally noetherian, thus $R[M_i]$ is noetherian, and therefore $U$ is noetherian and $M_i$ is of finite type (\cite[I.4.1.7; I.2.7.3]{ega}, \cite[7.7]{gilmer}). So, $S$ is locally noetherian and $\M$ is of finite type. The converse is clear by \ref{n60} and \cite[I.6.2.2]{ega}, and so a) is proven. b) is clear by \cite[I.2.7.2]{ega}, a) and \ref{n130} c). Considering the canonical immersion $S\rightarrowtail\xm(S)$ we see that if $\xm(S)$ is pointwise noetherian then $S$ is so or $I=\emptyset$. Conversely, suppose that $S$ is pointwise noetherian. If $U=\spec(R)$ is an affine open subscheme of $S$, $i\in I$, and $\mathfrak{p}\in\spec(R[M_i])$, then $R_{\mathfrak{p}\cap R}$ is noetherian, hence so is the ring of fractions $R[M_i]_{\mathfrak{p}}$ of $R_{\mathfrak{p}\cap R}[M_i]$, and therefore $X_{\M,i}(R)$ is pointwise noetherian. Thus, $\xm(S)$ is pointwise noetherian, and c) is proven. To show the first claims in d) and e) it suffices by \ref{n10} b), c) to show that the target of a surjective (and quasicompact) morphism of schemes whose source is topologically (locally) noetherian is also topologically (locally) noetherian; this is straightforward to prove. The second claims in d) and e) follow from \cite[2.6]{ohm-pen} or from d) and \ref{n130} c), respectively. Finally, considering the canonical immersion $S\rightarrowtail\xm(S)$, the first claim in f) holds by \cite[I.6.4.3]{ega}, while the second one follows from \ref{n60} and \cite[I.6.4.7]{ega}.
\end{proof}

Now we turn to catenarity and universal catenarity. We call a ring $R$ universally catenary if every $R$-algebra of finite type is catenary, and we call a scheme universally catenary if the stalks of its structure sheaf are universally catenary.\footnote{In contrast to \cite[IV.5.6.3]{ega} we do not require a universally catenary scheme to be locally noetherian.}

\begin{prop}
We consider the following statements: (1) $S$ is universally catenary or $I=\emptyset$; (2) $\xm(S)$ is universally catenary; (3) $\xm(S)$ is catenary. If $\M$ is of finite type then it holds (1)$\Leftrightarrow$(2)$\Rightarrow$(3); if\/ $\M$ is cancellable, torsionfree, of finite type, and nonzero and $S$ is pointwise noetherian, then (1)--(3) are equivalent.
\end{prop}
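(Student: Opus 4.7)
The case $I=\emptyset$ is vacuous since then $\xm(S)=\emptyset$, so assume throughout $I\neq\emptyset$.

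The equivalence (1)$\Leftrightarrow$(2) and the implication (2)$\Rightarrow$(3) rest on a routine stalk analysis. For (1)$\Rightarrow$(2), any $x\in\xm(S)$ lies in an affine chart $X_{\M,i}(\spec R)=\spec R[M_i]$ with $y\dfgl\tm(S)(x)\in\spec R\subseteq S$, so $\mathcal O_{\xm(S),x}$ is a localization of $\mathcal O_{S,y}[M_i]$; as $\M$ is of finite type this is a finitely generated algebra over the universally catenary ring $\mathcal O_{S,y}$, hence itself universally catenary (every finitely generated algebra over it is also finitely generated over $\mathcal O_{S,y}$, hence catenary), and universal catenarity passes to localizations (a finitely generated algebra over a localization is a localization of a finitely generated algebra over the whole). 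For (2)$\Rightarrow$(1), the canonical immersion $\sigma\colon S\rightarrowtail\xm(S)$ from \ref{alg130} induces at each $y\in S$ a surjection $\mathcal O_{\xm(S),\sigma(y)}\twoheadrightarrow\mathcal O_{S,y}$ on stalks, and universal catenarity passes to quotients. The implication (2)$\Rightarrow$(3) is immediate because universally catenary stalks are catenary.

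For (3)$\Rightarrow$(1) under the stronger hypotheses, fix $y\in S$ and set $A\dfgl\mathcal O_{S,y}$, noetherian by assumption. By the same stalk analysis as in (1)$\Rightarrow$(2), catenarity of $\xm(S)$ forces every localization of $R[M_i]$ at a prime to be catenary for every affine open $\spec R\ni y$ and every $i\in I$, whence $R[M_i]$ is catenary; localising further at the prime of $R$ corresponding to $y$ shows $A[M_i]$ is catenary for every $i$. Choose $j\in I$ with $M_j\neq 0$: since $M_j$ is cancellable and torsionfree, $\diff(M_j)\cong\Z^r$ with $r\geq 1$, and $A[\diff(M_j)]\cong A[X_1^{\pm 1},\ldots,X_r^{\pm 1}]$ is a localization of $A[M_j]$, hence catenary.

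The main obstacle is the final algebraic step: to upgrade catenarity of $A[M_j]$ (equivalently, of its Laurent localization $A[X_1^{\pm 1},\ldots,X_r^{\pm 1}]$) to universal catenarity of the noetherian local ring $A$. The plan is to appeal to a Ratliff-type criterion---universal catenarity of a noetherian ring being equivalent to catenarity of a sufficiently large polynomial extension---and to use all four structural hypotheses on $M_j$ (cancellable, torsionfree, finitely generated, nonzero), together with the noetherianness of $A$, to transfer catenarity from the monoid algebra $A[M_j]$ to a polynomial algebra $A[X_1,\ldots,X_n]$ over $A$. This last bridge between monoid-algebra catenarity and polynomial-algebra catenarity is precisely the delicate point of the proof.
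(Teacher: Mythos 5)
Your reductions for (1)$\Leftrightarrow$(2)$\Rightarrow$(3) match the paper's, and your reduction of (3)$\Rightarrow$(1) to catenarity of the Laurent polynomial ring $A[X_1^{\pm1},\dots,X_r^{\pm1}]$ over a noetherian local stalk $A=\mathscr{O}_{S,y}$ is also essentially the paper's route (the paper works on an affine open $\spec(R)$ and invokes Ratliff's theorem in a pointwise-noetherian form; localising first as you do is an equivalent bookkeeping choice). However, the step you explicitly defer --- passing from catenarity of the monoid/Laurent algebra to catenarity of a polynomial algebra so that a Ratliff-type criterion applies --- is not a routine transfer, and your proposal contains no argument for it. Catenarity is inherited by localisations but does not descend along them, so knowing that $A[T^{\pm1}]=A[\Z]$ is catenary does not formally give that $A[T]=A[\N]$ is. This is a genuine gap: it is exactly the point the paper singles out as nontrivial (crediting the argument to Moret-Bailly).

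The paper's resolution is short but not obvious: first cut down from $A[\Z^r]$ to $A[\Z]$ by a quotient (catenarity passes to quotients, so this direction is harmless), then observe that $\spec(A[T])$ is covered by the two basic open sets $D(T)$ and $D(T-1)$, each isomorphic to $\spec(A[\Z])$ --- the first tautologically, the second via the change of variable $U=T-1$, since $A[T]_{T-1}\cong A[U]_U$. Catenarity is a local property of schemes, so $A[\N]=A[T]$ is catenary, and Ratliff's theorem then gives that $A$ is universally catenary. Without this covering trick (or some substitute for it), your proof plan stops precisely where the real work begins.
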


\begin{proof}
(1) implies (2) by \ref{n60}. Considering the canonical immersion $S\rightarrowtail\xm(S)$ we see that (2) implies (1). Suppose that the conditions of the second statement are fulfilled and that $\xm(S)$ is catenary. Let $U=\spec(R)$ be an affine open subscheme of $S$, so that $\xm(U)$ is an open subscheme of $\xm(S)$ and hence catenary. There exists $i\in I$ with $M_i\neq 0$, implying $\diff(M_i)\cong\Z^m$ for $m\in\N^*$. Therefore, $R[\Z^m]$ is a ring of fractions of the catenary ring $R[M_i]$, hence catenary, and $R[\Z]$ is a quotient of the catenary ring $R[\Z^m]$, hence catenary. Writing $R[T]\dfgl R[\N]$, the open sets $D(T)$ and $D(T-1)$ cover $\spec(R[T])$ and are both isomorphic to $\spec(R[\Z])$ and thus catenary. Therefore, $R[\N]$ is catenary, and thus $R$ is universally catenary by Ratliff's Theorem \cite[2.6]{ratliff} (cf.~\cite[Theorem 31.7, Corollary 1]{mat}) which is easily seen to hold for pointwise noetherian instead of noetherian rings. So, $S$ is universally catenary.\footnote{The proof that catenarity of $R[\Z]$ implies catenarity of $R[\N]$ is due to Laurent Moret-Bailly via {\tt MathOverflow.net}.}
\end{proof}

The following results give conditions under which $\tm(S)$ is irreducible, connected, reduced, or normal -- see \cite[IV.4.5.5; IV.6.8.1]{ega} for the relevant definitions --, and under which $\xm$ respects and reflects irreducibility, connectivity, reducedness, and normality. As applications we show that $\xm$ commutes with $\bullet_{\red}$, and we give a description of the irreducible or connected components of $\xm(S)$ in terms of the corresponding components of $S$.

\begin{prop}\label{n90}
Suppose that $\M$ is torsionfree. If\/ $\M$ is cancellable then $\tm(S)$ is connected; if moreover $I\neq\emptyset$ then $\tm(S)$ is irreducible. If\/ $\M$ is of finite type then $\tm(S)$ is reduced; if\/ $\M$ is moreover integrally closed then $\tm(S)$ is normal.
\end{prop}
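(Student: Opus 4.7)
The plan is to establish all four assertions via the geometric-fiber characterization of the relevant morphism properties (cf.~\cite[IV.4.5.5; IV.6.8.1]{ega}); by the base-change compatibility of $\xm$ (\ref{alg130}), this reduces the question to showing that, under the stated hypotheses on $\M$, the scheme $\xm(K)$ has the corresponding property for every algebraically closed field $K$. All arguments work with the affine open covering $(X_{\M,i}(K))_{i\in I}=(\spec(K[M_i]))_{i\in I}$ and its pairwise intersections $X_{\M,\inf(i,j)}(K)$, together with the common closed $K$-point furnished by the canonical augmentation section $\spec(K)\hookrightarrow X_{\M,i}(K)$ described at the end of \ref{alg130}.

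\emph{Connectedness and irreducibility} (assuming $\M$ cancellable and torsion-free): by \ref{alg10}, $\diff(M_i)$ is a torsion-free abelian group, so the classical theorem that group algebras of torsion-free abelian groups over a field are integral gives that $K[\diff(M_i)]$ is a domain. Since $M_i$ embeds into $\diff(M_i)$, the subalgebra $K[M_i]\subseteq K[\diff(M_i)]$ is a domain too, so each $X_{\M,i}(K)$ is irreducible. The common $K$-point from the canonical section then shows that $\xm(K)$ is connected whenever $I\neq\emptyset$ (the case $I=\emptyset$ being vacuous). For irreducibility assuming $I\neq\emptyset$: for any $i,j\in I$ the nonempty open $X_{\M,\inf(i,j)}(K)$ sits inside both $X_{\M,i}(K)$ and $X_{\M,j}(K)$ and must contain the generic point of each; these generic points therefore coincide with the unique generic point of $X_{\M,\inf(i,j)}(K)$, producing a single generic point of $\xm(K)$.

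\emph{Reducedness and normality} (assuming $\M$ torsion-free and of finite type): the standard criterion for reducedness of monoid algebras (cf.~\cite{gilmer}) yields that torsion-freeness of $M_i$ in the sense of \ref{alg10} forces $K[M_i]$ to be reduced, whence $\xm(K)$ is reduced by locality. Assuming further that $\M$ is integrally closed, each $M_i$ is cancellable with $\diff(M_i)\cong\Z^{n_i}$ (finitely generated and torsion-free), so $K[\diff(M_i)]$ is a Laurent polynomial ring and in particular normal. The integral-closedness of $M_i$ in $\diff(M_i)$ translates (cf.~\cite{gilmer}) into integral-closedness of $K[M_i]$ in $K[\diff(M_i)]$, and since both have the same field of fractions, $K[M_i]$ is normal. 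Thus $\xm(K)$ is normal. The main obstacle is the verification of the two monoid-algebra inputs — the reducedness criterion, which must handle the possibly non-cancellative torsion-free case, and the passage of integral closedness from the monoid to its algebra — but both are available in Gilmer's monograph.
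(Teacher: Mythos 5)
Your proof follows essentially the same route as the paper's: reduce via base change to the fibres over a field, invoke Gilmer's criteria for a monoid algebra over a field to be integral, reduced, or normal, and glue using the fact that the affine pieces $X_{\M,i}(K)$ pairwise meet along the nonempty opens $X_{\M,\inf(i,j)}(K)$ (your common-augmentation-point and common-generic-point arguments are just an unwinding of the Bourbaki citation the paper uses for irreducibility). The only things to add are that the morphism-level conclusions also require flatness of $\tm(S)$ (Proposition \ref{n10} c)) and, for the reduced and normal cases, local noetherianness of the fibres (Proposition \ref{n70} a), using that $\M$ is of finite type), both of which are already available earlier in the paper.
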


\begin{proof}
Let $K$ be a field and let $i\in I$. If $\M$ is cancellable then $X_{\M,i}(K)$ is integral (\cite[8.1]{gilmer}), hence irreducible, and in particular nonempty and connected. Thus, if $I\neq\emptyset$ then considering its covering $(X_{\M,i}(K))_{i\in I}$ it follows from \cite[II.4.1 Proposition 7]{ac} that $\xm(K)$ is irreducible. This implies the first claim. If $\M$ is of finite type then $X_{\M,i}(K)$ is reduced (\cite[9.9]{gilmer}), and if $\M$ is moreover integrally closed then $X_{\M,i}(K)$ is normal (\cite[12.6]{gilmer}, \cite[V.1.5 Proposition 16, Corollaire 1]{ac}). So, $\xm(K)$ is reduced (or normal, respectively), and thus the second claim follows from \ref{n10} c) and \ref{n70} a).
\end{proof}

\begin{prop}\label{n240}
If $\xm(S)$ is reduced then $S$ is reduced or $I=\emptyset$; if\/ $\M$ is torsionfree then the converse holds.
\end{prop}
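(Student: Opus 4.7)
The plan is to prove the two implications separately. For the first, assume $\xm(S)$ is reduced and $I\neq\emptyset$ (otherwise there is nothing to show). Fix any $i\in I$ and any affine open $\spec(R)\subseteq S$. The open subscheme $X_{\M,i}(\spec(R))=\spec(R[M_i])$ of $\xm(S)$ is reduced, so $R[M_i]$ is reduced. The structural morphism $R\to R[M_i]$ is split by the augmentation $e_m\mapsto 1$ (or, equivalently, $R$ is an $R$-module direct summand of the free $R$-module $R[M_i]$), so $R$ is a subring of $R[M_i]$ and is therefore reduced. Varying the affine open yields reducedness of $S$. The geometric incarnation of this argument is the canonical immersion $S\rightarrowtail\xm(S)$ from \ref{alg130}, which is a section of $\tm(S)$.

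For the converse, assume $S$ is reduced and $\M$ is torsionfree. The scheme $\xm(S)$ is covered by affine opens of the form $X_{\M,i}(U)=\spec(R[M_i])$ with $U=\spec(R)\subseteq S$ affine open and $R$ reduced, so it suffices to show that $R[M]$ is reduced whenever $R$ is reduced and $M$ is a torsionfree monoid. The plan is to reduce this, in two steps, to the case of a field and a finitely generated torsionfree monoid. First, write $M=\bigcup_\alpha M_\alpha$ as the filtered union of its finitely generated submonoids (each torsionfree); then $R[M]=\varinjlim_\alpha R[M_\alpha]$, and since a filtered colimit of reduced rings is reduced, we may assume $M$ is finitely generated. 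Second, reducedness of $R$ yields an injection $R\hookrightarrow\prod_\mathfrak{p}\kappa(\mathfrak{p})$ over the minimal primes of $R$; tensoring with the free $R$-module $R[M]$ preserves injectivity and gives an injection $R[M]\hookrightarrow\prod_\mathfrak{p}\kappa(\mathfrak{p})[M]$, reducing us to the case where $R$ is a field. This remaining case is exactly the input provided by \cite[9.9]{gilmer} and already used in the proof of \ref{n90}.

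The forward direction is a routine subring argument; the content lies in the converse. The main obstacle is the two-step reduction to the field case: the colimit step is needed because $\M$ is not assumed to be of finite type, and the passage through residue fields at minimal primes crucially uses the $R$-freeness (hence flatness) of $R[M]$ to keep the relevant ring homomorphism injective after base change.
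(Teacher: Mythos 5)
Your proof is correct, and its overall skeleton (forward direction via ``$S$ sits inside $\xm(S)$'', converse by affine-local reduction to a statement about $R[M_i]$) matches the paper's. The details differ in both halves. For the forward direction the paper invokes faithful flatness of $\tm(S)$ (Proposition \ref{n10} c)) together with descent of reducedness \cite[IV.2.1.13]{ega}, whereas you use the canonical section $S\rightarrowtail\xm(S)$, i.e.\ the splitting $R\to R[M_i]\to R$; both come down to the injectivity of $R\to R[M_i]$ and are equally valid. For the converse the paper simply cites \cite[9.9]{gilmer} as asserting that $R[M_i]$ is reduced for \emph{any} reduced ring $R$ and torsionfree $M_i$, while you only take the field case as input and recover the general case yourself via the filtered-colimit reduction to finitely generated submonoids and the embedding $R[M]\hookrightarrow\prod_{\mathfrak p}\kappa(\mathfrak p)[M]$ over the minimal primes. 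What your route buys is self-containedness and a weaker external input; what it costs is the extra bookkeeping, and one step deserves a slightly more careful phrasing: since the product over minimal primes may be infinite, $\bigl(\prod_{\mathfrak p}\kappa(\mathfrak p)\bigr)\otimes_RR[M]$ need not equal $\prod_{\mathfrak p}\kappa(\mathfrak p)[M]$, so rather than ``tensoring preserves injectivity'' you should say that the composite $R[M]\to\prod_{\mathfrak p}\kappa(\mathfrak p)[M]$ is injective because $R[M]$ is free on $M$ and injectivity can be checked coefficientwise via $R\hookrightarrow\prod_{\mathfrak p}\kappa(\mathfrak p)$. This is a cosmetic repair, not a gap.
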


\begin{proof}
If $\xm(S)$ is reduced and $I\neq\emptyset$ then $S$ is reduced by \ref{n10} c) and \cite[IV.2.1.13]{ega}. The converse is clear if $I=\emptyset$. If $S$ is reduced, $U$ is an affine open subscheme of $S$, and $i\in I$, then $U$ is reduced, hence so is $X_{\M,i}(U)$ (\cite[9.9]{gilmer}), and therefore $\xm(S)$ is reduced.
\end{proof}

\begin{cor}\label{n245}
If\/ $\M$ is torsionfree then there is a canonical isomorphism $$\xm(\bullet)_{\red}\cong\xm(\bullet_{\red}).$$
\end{cor}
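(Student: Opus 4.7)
The plan is to identify the morphism $\xm(S_{\red})\to\xm(S)$ induced by functoriality with the canonical inclusion $\xm(S)_{\red}\hookrightarrow\xm(S)$, using Proposition \ref{n240} to supply reducedness.

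First I would observe that the canonical morphism $j_S\colon S_{\red}\to S$ is a surjective closed immersion. By \ref{alg130}, $\xm(\bullet)=\xm(\Z)\times\bullet$, so $\xm(j_S)\colon\xm(S_{\red})\to\xm(S)$ arises from $j_S$ by base change along $\xm(\Z)\to\spec(\Z)$. Since closed immersions and surjectivity of morphisms of schemes are both stable under base change, $\xm(j_S)$ is itself a surjective closed immersion.

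Next I would invoke \ref{n240}: torsionfreeness of $\M$ together with reducedness of $S_{\red}$ imply that $\xm(S_{\red})$ is reduced. Combining these observations, $\xm(j_S)$ exhibits $\xm(S_{\red})$ as a reduced closed subscheme of $\xm(S)$ whose underlying topological space coincides with that of $\xm(S)$. By the universal characterisation of the reduced subscheme, $\xm(j_S)$ then factors uniquely through an isomorphism $\xm(S_{\red})\cong\xm(S)_{\red}$, and naturality in $S$ is automatic from the naturality of $j_{\bullet}$ and the functoriality of $\xm$ and of $\bullet_{\red}$.

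The only nontrivial ingredient is the preservation of reducedness supplied by Proposition \ref{n240}, which is precisely where the torsionfreeness hypothesis enters; without it the constructed morphism would still exist but could fail to be an isomorphism. The rest is formal manipulation with base change and the universal property of the reduced subscheme, so I do not expect any further obstacle.
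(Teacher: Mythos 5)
Your argument is correct and follows essentially the same route as the paper: both proofs use the base-change description $\xm(S_{\red})\cong\xm(S)\times_SS_{\red}$ to see that $\xm(S_{\red})\to\xm(S)$ is a closed immersion that is a homeomorphism, invoke Proposition \ref{n240} for reducedness of $\xm(S_{\red})$, and conclude by the uniqueness of the reduced subscheme structure on a given closed subset. The paper merely packages this as a cartesian square producing a comparison morphism $j\colon\xm(S)_{\red}\to\xm(S_{\red})$ which is then shown to be an isomorphism, but the substance is identical.
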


\begin{proof}
Let $S$ be a scheme. We have a commutative diagram of schemes $$\xymatrix@C50pt@R15pt{&\xm(S_{\red})\ar[r]^{\tm(S_{\red})}\ar[d]&S_{\red}\ar[d]\\\xm(S)_{\red}\ar[r]^i\ar[ru]^j&\xm(S)\ar[r]^{\tm(S)}&S,}$$ where the unmarked morphisms are the canonical ones and the quadrangle is cartesian (\cite[I.4.5.13]{ega}). Moreover, $i$ is a closed immersion, hence $j$ is a monomorphism, the vertical morphisms are homeomorphisms (\cite[I.4.5.3]{ega}), and $\xm(S_{\red})$ is reduced (\ref{n240}). Therefore, $\xm(S_{\red})$ is a reduced subscheme of $\xm(S)$ with underlying topological space the underlying topological space of $\xm(S)$, and thus equal to $\xm(S)_{\red}$. So, $j$ is an isomorphism that is moreover functorial in $S$.
\end{proof}

\begin{prop}\label{n220}
a) If $\xm(S)$ is connected then $S$ is connected or $I=\emptyset$; if\/ $\M$ is torsionfree and cancellable, and if moreover $I$ is finite, or\/ $\M$ is of finite type, or $S$ is affine and $I$ has a smallest element, then the converse holds.

b) If $\xm(S)$ is irreducible then $S$ is irreducible and $I\neq\emptyset$; if\/ $\M$ is torsionfree and cancellable then the converse holds.

c) If $\xm(S)$ is integral then $S$ is integral and $I\neq\emptyset$; if\/ $\M$ is torsionfree and cancellable then the converse holds.

d) If $\xm(S)$ is normal then $S$ is normal or $I=\emptyset$; if\/ $\M$ is torsionfree and integrally closed then the converse holds.
\end{prop}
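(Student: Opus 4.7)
The plan for all four parts proceeds through a common structure. For the forward (only-if) implications in (a)--(c) the key tool is that when $I\neq\emptyset$ the morphism $\tm(S)$ is faithfully flat by \ref{n10}~c), so $\xm(S)\to S$ is a continuous surjection and connectedness, irreducibility, and integrality all descend along it; the ``$I=\emptyset$'' exemptions arise because in that case $\xm(S)=\emptyset$, which is connected but not irreducible. For (d), the same faithful flatness transfers the Serre conditions $(R_1)$ and $(S_2)$ from $\xm(S)$ down to $S$, hence normality.

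For the converse in (a), I would use the following uniform argument that in fact handles all three listed subcases at once. By \ref{alg130} each $t_{\M,i}(S)$ admits a closed section $\sigma_i$, and these glue to the canonical section $\sigma\colon S\rightarrowtail\xm(S)$; in particular $\sigma(S)\subseteq X_{\M,i}(S)$ for every $i\in I$. Since $M_i$ is torsionfree and cancellable, the fibre $F_s$ of $t_{\M,i}(S)$ over $s\in S$ is $\spec(\kappa(s)[M_i])$, irreducible and connected. Given $S$ connected, each set $F_s\cup\sigma_i(S)$ is connected (they meet at $\sigma_i(s)$), so $X_{\M,i}(S)=\bigcup_{s\in S}(F_s\cup\sigma_i(S))$ is a union of connected sets all containing the connected $\sigma_i(S)$, hence connected. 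Then $\xm(S)=\bigcup_{i\in I}X_{\M,i}(S)$ is a union of connected sets all containing the connected nonempty $\sigma(S)$, hence connected.

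For the converse in (b), I would exploit flatness more directly. Since $\tm(S)$ is flat, the image under $\tm(S)$ of any open of $\xm(S)$ is stable under generalisation. By \ref{n245} I may replace $S$ by $S_\red$ and assume $S$ integral with generic point $\eta$. For any nonempty open $V\subseteq\xm(S)$, $\tm(S)(V)$ is nonempty and stable under generalisation, hence contains $\eta$; so $V$ meets the fibre $\xm(\kappa(\eta))$, which is irreducible by \ref{n90}, and $V$ therefore contains its generic point $\xi$. Thus $\xi$ lies in every nonempty open of $\xm(S)$, proving $\xm(S)$ irreducible. Statement (c) follows by combining (b) with \ref{n240}.

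For the converse in (d) I would cover $S$ by normal affine opens $U=\spec(R)$ and apply the classical monoid-algebra fact that $R[M_i]$ is normal when $R$ is normal and $M_i$ is torsionfree integrally closed; this makes each $X_{\M,i}(U)=\spec(R[M_i])$ normal, hence $\xm(U)$ is normal (normality being local), and so $\xm(S)$ is normal. The principal technical input is this monoid-algebra lemma at the required generality (no finite-type hypothesis on $M_i$), which I would derive by realising $R[M_i]$ as its own integral closure inside $R[\diff(M_i)]$: the latter is a normal localisation of a polynomial ring over $R$, and the integral closure of $R[M_i]$ inside it is $R[\bar{M}_i]$ with $\bar{M}_i=M_i$ by hypothesis. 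The secondary care-point is the descent of normality along the faithfully flat but not (in general) locally finite type morphism $\tm(S)$; I would handle this at the stalk level via the standard faithfully flat descent of $(R_1)$ and $(S_2)$.
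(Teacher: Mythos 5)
Your treatment of a)--c) is correct but follows genuinely different routes from the paper's, and in a) it proves more. The paper's converse in a) splits into cases (\cite[I.7.3.10; IV.4.5.7]{ega} together with \ref{n10}, \ref{n60}, \ref{n90} when $I$ is finite or $\M$ is of finite type; Gilmer's idempotent results plus the common open piece $X_{\M,i_0}(S)$ when $S$ is affine and $I$ has a smallest element $i_0$), whereas your section-plus-connected-fibres argument is purely topological and uses none of the three auxiliary hypotheses -- it would in fact strengthen the proposition. The one point to make explicit is that the images of the closed sections $\sigma_i$ all coincide with $\sigma(S)$ inside $\xm(S)$; this holds because the augmentations $\Z[M_j]\rightarrow\Z$ commute with the transition maps $\Z[p_{ij}]$ and $I$ is a lower semilattice, so that $\sigma(S)$ is a nonempty connected subset of every $X_{\M,i}(S)$. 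For b) the paper reduces via \cite[II.4.1 Proposition 3]{ac} to the irreducibility of each affine piece $X_{\M,i}(R)$, using \ref{n245} and Gilmer's integrality criterion; your generic-point argument via going-down for flat morphisms is an equally valid alternative.

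Part d), however, has genuine gaps in both directions. Forward: you propose to descend normality through $(R_1)$ and $(S_2)$, but these properties, as defined in the paper and in EGA, include local noetherianity, and Serre's criterion is a theorem about noetherian local rings; no noetherian hypothesis is available here ($\M$ need not be of finite type and $S$ is arbitrary), so this route breaks down. The correct tool is the direct descent of normality (integrally closed domain) along a flat local homomorphism, \cite[IV.2.1.13]{ega}, which needs no finiteness. Converse: first, an affine open $\spec(R)$ of a normal scheme need not have $R$ a domain, so one must pass to the local rings $R_{\mathfrak{p}\cap R}$ and observe that $R[M_i]_{\mathfrak{p}}$ is a ring of fractions of $R_{\mathfrak{p}\cap R}[M_i]$ -- a reduction your sketch omits. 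Second, your derivation of the monoid-algebra lemma presents $R[\diff(M_i)]$ as a localisation of a polynomial ring, but no finite-generation hypothesis is made in d), so $\diff(M_i)$ is merely a torsionfree abelian group and need not be free (e.g.\ $\Q$), and that description is unavailable. The statement actually needed -- $R[M_i]$ is an integrally closed domain whenever $R$ is one and $M_i$ is torsionfree, cancellable and integrally closed -- is exactly \cite[12.11]{gilmer}, which the paper simply cites; proving it from scratch requires more than the sketch given (for instance a filtered-colimit argument over finitely generated submonoids).
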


\begin{proof}
a) The first claim holds by \ref{n10} c) and \cite[I.11.2 Proposition 4]{tg}. Suppose that the conditions of the second statement are fulfilled and that $S$ is connected and $I\neq\emptyset$. If $I$ is finite or $\M$ is of finite type then \ref{n10} b), c), \ref{n60}, \ref{n90} and \cite[I.7.3.10; IV.4.5.7]{ega} imply that $\xm(S)$ is connected. If $S$ is affine then $X_{\M,i}(S)$ is connected (\cite[11.7; 10.8]{gilmer}, \cite[II.4.3 Proposition 15, Corollaire 2]{ac}), and since $I$ has a smallest element it follows that $\xm(S)$ is connected (\cite[I.11.1 Proposition 2]{tg}). b) The first claim holds by \ref{n10} c) and \cite[II.4.1 Proposition 4]{ac}. For the converse it follows from \cite[II.4.1 Proposition 3]{ac} that it suffices to show that if $R$ is a ring such that $\spec(R)$ is irreducible and $i\in I$, then $X_{\M,i}(R)$ is irreducible. But in this situation, $R_{\red}$ is integral, hence $R[M_i]_{\red}\cong R_{\red}[M_i]$ is integral (\ref{n245}, \cite[8.1]{gilmer}) and thus $X_{\M,i}(R)$ is irreducible. c) follows immediately from b) and \ref{n240}. d) The first claim holds by \ref{n10} c) and \cite[IV.2.1.13]{ega}. For the converse, suppose that $S$ is normal, let $U=\spec(R)$ be an affine open subscheme of $S$, let $i\in I$, and let $\mathfrak{p}\in\spec(R[M_i])$. Then, $R$ is normal, hence $R_{\mathfrak{p}\cap R}[M_i]$ is integral and integrally closed (\cite[12.11]{gilmer}), and thus so is its ring of fractions $R[M_i]_{\mathfrak{p}}$ (\cite[V.1.5 Proposition 16, Corollaire 1]{ac}). Therefore, $X_{\M,i}(R)$ is normal, and the claim is proven.
\end{proof}

\begin{cor}\label{n270}
Suppose that\/ $\M$ is torsionfree and cancellable, and that $I\neq\emptyset$.

a) The maps $Z\mapsto\xm(Z)$ and $Z\mapsto\tm(S)(Z)$ induce mutually inverse bijections between the sets of irreducible components of $S$ and $\xm(S)$.

b) If $I$ is finite, or\/ $\M$ is of finite type, or $S$ is affine and $I$ has a smallest element, then the maps $Z\mapsto\xm(Z)$ and $Z\mapsto\tm(S)(Z)$ induce mutually inverse bijections between the sets of connected components of $S$ and $\xm(S)$.
\end{cor}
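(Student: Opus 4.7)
Both parts follow a common template; I will treat them in parallel. The three ingredients I will use are: (i) for a closed immersion $Z \hookrightarrow S$, the base-change compatibility from \ref{alg130} identifies $\xm(Z)$ with $\xm(S) \times_S Z$, whose underlying set in $\xm(S)$ is $\tm(S)^{-1}(Z)$; (ii) by \ref{n10} c), $\tm(Z)$ is faithfully flat whenever $I \neq \emptyset$, so in particular $\tm(S)(\xm(Z)) = Z$; and (iii) for $Z \subseteq S$ an irreducible (resp.\ connected) closed subset endowed with the reduced induced structure, $\xm(Z)$ is irreducible by \ref{n220} b) (resp.\ connected by \ref{n220} a), the latter requiring an additional hypothesis to be checked below).

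For (a), let $Z$ be an irreducible component of $S$. By (iii) and (i), $\xm(Z)$ is an irreducible closed subset of $\xm(S)$. For maximality, suppose $W$ is an irreducible closed subset of $\xm(S)$ containing $\xm(Z)$; then by (ii), $\overline{\tm(S)(W)}$ is an irreducible closed subset of $S$ containing $Z$, hence equals $Z$ by maximality, giving $W \subseteq \tm(S)^{-1}(Z) = \xm(Z)$. Conversely, for any irreducible component $W'$ of $\xm(S)$, the closure $\overline{\tm(S)(W')}$ is contained in some irreducible component $Z'$ of $S$, so $W' \subseteq \xm(Z')$, and maximality of $W'$ forces equality. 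The two maps $Z \mapsto \xm(Z)$ and $W \mapsto \tm(S)(W)$ are therefore mutual inverses.

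For (b), the same argument works verbatim with ``irreducible'' replaced by ``connected'', since a connected subset of $S$ containing a connected component must coincide with it. The only additional point to check is that when $Z$ is a connected component of $S$, the converse direction of \ref{n220} a) still applies to $Z$: the hypotheses ``$I$ finite'' and ``$\M$ of finite type'' depend only on $\M$ and thus transfer trivially, while in the third case I use that a closed subscheme of an affine scheme is affine, so that $Z$ (with its reduced induced structure) satisfies ``$Z$ affine and $I$ has a smallest element''. This transfer of hypotheses is the only nontrivial bookkeeping step; everything else is purely topological.
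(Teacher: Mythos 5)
Your proof is correct and follows essentially the same route as the paper's: both directions rest on \ref{n220} together with the set-theoretic identification $\xm(Z)=\tm(S)^{-1}(Z)$ and the surjectivity of $\tm(Z)$ coming from \ref{n10} c). The only difference is presentational --- you spell out the base-change identification and the affineness of a closed subscheme of an affine scheme (needed to transfer the hypothesis in b)), which the paper leaves implicit.
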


\begin{proof}
a) If $Z$ is an irreducible component of $S$ then $\xm(Z)$ is an irreducible closed subset of $\xm(S)$ (\ref{n220} b)), and if $Y$ is an irreducible closed subset of $\xm(S)$ containing $\xm(Z)$ then $\tm(S)(Y)$ is irreducible and contains $Z$, implying $\tm(S)(Y)=Z$ and thus $\xm(Z)=Y$. Conversely, if $Z$ is an irreducible component of $\xm(S)$ then $\tm(S)(Z)$ is irreducible, and if $Y$ is an irreducible closed subset of $S$ containing $\tm(S)(Z)$ then \ref{n220} b) shows that $\xm(Y)$ is an irreducible closed subset of $\xm(S)$ containing $Z$, implying $Z=\xm(Y)$. This proves the claim. b) is shown analogously on use of \ref{n220} a). 
\end{proof}

Next we treat Serre's properties $(S_k)$ and Cohen-Macaulayness. Recall that for $k\in\N$ a scheme $X$ is said to have property $(S_k)$ if it is locally noetherian and  $\depth(\mathscr{O}_{X,x})\geq\min\{k,\dim(\mathscr{O}_{X,x})\}$ for every $x\in X$, and a morphism of schemes is said to have property $(S_k)$ if it is flat and its fibres have $(S_k)$. Furthermore, a scheme or morphism of schemes is called Cohen-Macaulay if it has $(S_k)$ for every $k\in\N$ (\cite[IV.5.7.1--2; IV.6.8.1]{ega}).

\begin{prop}\label{n80}
If\/ $\M$ is torsionfree, of finite type, and integrally closed, then $\tm(S)$ is a Cohen-Macaulay morphism.
\end{prop}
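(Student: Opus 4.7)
The plan is to verify the two conditions defining a Cohen--Macaulay morphism, namely flatness and Cohen--Macaulay fibres. Flatness of $\tm(S)$ is immediate from \ref{n10} c), so the task reduces to showing that each fibre of $\tm(S)$ is Cohen--Macaulay. By the compatibility of the functor $\xm$ with base change noted in \ref{alg130}, the fibre of $\tm(S)$ over $s \in S$ is canonically isomorphic to $\xm(\kappa(s))$, where $\kappa(s)$ denotes the residue field at $s$. Hence it suffices to prove that $\xm(K)$ is a Cohen--Macaulay scheme for every field $K$.

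Since Cohen--Macaulayness can be checked on the stalks of an affine open cover, and $(X_{\M,i}(K))_{i\in I}$ is such a cover of $\xm(K)$, I would further reduce to showing that each $K$-algebra $K[M_i]$ is Cohen--Macaulay. The monoid-theoretic hypotheses on $\M$ are designed precisely so that this becomes an instance of the classical theorem on normal affine semigroup rings: since $M_i$ is integrally closed it is cancellable by \ref{alg10}, so that $M_i$ embeds into its difference group $\diff(M_i)$; combined with torsionfreeness of $M_i$, hence of $\diff(M_i)$, and with finite generation of $M_i$, one obtains $\diff(M_i) \cong \Z^{r_i}$ for some $r_i \in \N$. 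Thus $M_i$ identifies with a finitely generated, integrally closed submonoid of a free abelian group of finite rank, i.e., with a normal affine monoid.

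At this point I would invoke Hochster's theorem, which asserts that the monoid algebra $K[M]$ of a normal affine monoid $M$ over a field $K$ is Cohen--Macaulay. This yields Cohen--Macaulayness of $K[M_i]$ and completes the argument. The substantive input is Hochster's theorem; the remainder is a routine chain of reductions by base change, affine open covering, and the elementary monoid-theoretic observations from Section \ref{sec2}.
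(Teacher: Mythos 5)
Your proposal is correct and follows essentially the same route as the paper: flatness from \ref{n10} c), reduction of the fibres to $\xm(K)$ for fields $K$, and Cohen--Macaulayness of the affine pieces $K[M_i]$ via Hochster's theorem on normal affine monoid algebras (with local noetherianness of the fibres supplied by the finite-type hypothesis, as in \ref{n70} a)). The only difference is that you spell out the reductions that the paper's one-line proof leaves implicit.
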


\begin{proof}
If $K$ is a field then $\xm(K)$ is a Cohen-Macaulay scheme by \ref{n70} a) and Hochster's Theorem \cite[Theorem 1]{hochster} (cf. \cite[6.10]{bg}), so the claim follows from \ref{n10} c).
\end{proof}

\begin{cor}
Suppose that\/ $\M$ is torsionfree, of finite type, and integrally closed.

a) If $k\in\N$, then $\xm(S)$ has $(S_k)$ if and only if $S$ has $(S_k)$ or $I=\emptyset$.

b) $\xm(S)$ is a Cohen-Macaulay scheme if and only if $S$ is a Cohen-Macaulay scheme or $I=\emptyset$.
\end{cor}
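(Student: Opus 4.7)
The plan is to combine Proposition \ref{n80} with the standard ascent and descent of $(S_k)$ along the flat morphism $\tm(S)$. The case $I=\emptyset$ is trivial: then $\xm(S)$ is empty, so it has $(S_k)$ for every $k$ and is Cohen-Macaulay, and both biconditionals hold without any assumption on $S$. So I assume $I\neq\emptyset$; then $\tm(S)$ is faithfully flat by Proposition \ref{n10} c), and a Cohen-Macaulay morphism by Proposition \ref{n80}, whence in particular all of its fibres have $(S_k)$ for every $k\in\N$.

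For a), fix $k\in\N$. Suppose $\xm(S)$ has $(S_k)$; then it is locally noetherian by the definition recalled in the excerpt, so Proposition \ref{n70} a) yields that $S$ is locally noetherian, and faithfully flat descent of $(S_k)$ along $\tm(S)$ (via the depth formula for flat local homomorphisms, cf.~\cite[IV.6.4.1]{ega}) shows that $S$ has $(S_k)$. Conversely, if $S$ has $(S_k)$, then $S$ is locally noetherian; since $\M$ is of finite type, Proposition \ref{n70} a) gives that $\xm(S)$ is locally noetherian, and since $\tm(S)$ is flat with $(S_k)$ fibres, the ascent of $(S_k)$ (same reference) then yields $(S_k)$ for $\xm(S)$.

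Part b) follows immediately from a) applied for every $k\in\N$, since by the definitions recalled in the excerpt a scheme is Cohen-Macaulay precisely when it has $(S_k)$ for all $k$. The only substantive input is Proposition \ref{n80} itself (which in turn rests on Hochster's theorem); once that is available the argument reduces to a routine invocation of the depth formula for flat local morphisms, so I do not expect any real obstacle in the proof.
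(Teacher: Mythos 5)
Your proof is correct and follows essentially the same route as the paper, which deduces the corollary ``immediately from'' \ref{n70}~a), \ref{n10}~c), \ref{n80} and [EGA IV.6.4.1] --- exactly the ingredients you invoke (local noetherianness via \ref{n70}~a), faithful flatness via \ref{n10}~c), the Cohen--Macaulay fibres via \ref{n80}, and ascent/descent of $(S_k)$ along a flat morphism with $(S_k)$ fibres). Your explicit handling of the case $I=\emptyset$ and the derivation of b) from a) merely spell out what the paper leaves implicit.
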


\begin{proof}
Immediately from \ref{n70} a), \ref{n10} c), \ref{n80} and \cite[IV.6.4.1]{ega}.
\end{proof}

The last task in this section is to describe $\dim(\xm(S))$ in terms of $\dim(S)$. Under certain hypotheses we give lower and upper bounds, and a precise formula if $S$ is in addition locally noetherian. This allows us to quickly derive characterisations of equidimensionality, artinianness, and finiteness of schemes of the form $\xm(S)$.

\begin{prop}\label{n310}
If\/ $\M$ is cancellable and $r\in\N$ then $$\dim(S)+r\leq\dim(\xm(S))\leq(r+1)\dim(S)+r;$$ if $S$ is moreover locally noetherian then $$\dim(S)+r=\dim(\xm(S)).$$
\end{prop}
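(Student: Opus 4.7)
The plan is to reduce the computation of $\dim(\xm(S))$ to dimensions of monoid algebras on affine opens of $S$, and then invoke classical ring-theoretic bounds.

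First, I would use the open cover of $\xm(S)$ by the affine opens $X_{\M,i}(U) = \spec(\mathscr{O}_S(U)[M_i])$, as $i$ ranges over $I$ and $U$ over the affine open subschemes of $S$. Since the Krull dimension of a scheme equals the supremum of the Krull dimensions of any open cover, this yields
$$\dim(\xm(S)) = \sup_{i \in I,\, U} \dim(\mathscr{O}_S(U)[M_i]),$$
and likewise $\dim(S) = \sup_U \dim(\mathscr{O}_S(U))$.

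Second, I would invoke the following bounds for a ring $R$ and a cancellable monoid $M$ with $n \dfgl \rk(\diff(M)) \in \N$:
$$\dim(R) + n \leq \dim(R[M]) \leq (n+1)\dim(R) + n,$$
with equality $\dim(R[M]) = \dim(R) + n$ whenever $R$ is noetherian. These are generalisations to monoid algebras of Seidenberg's bounds and Krull's dimension formula for polynomial rings, as treated in Gilmer's monograph. The lower bound is obtained by combining a prime chain of length $\dim(R)$ in $\spec(R)$---lifted via the codiagonal retraction $R[M] \twoheadrightarrow R$---with a chain of length $n$ in the fibre over a suitable prime $\mathfrak{p}$, the latter having dimension $n$ since $\kappa(\mathfrak{p})[M]$ is a monoid algebra over a field on a cancellable monoid of rank $n$. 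The upper bound reduces, via the localisation $R[M] \subseteq R[\diff(M)]$ of~\ref{alg80} and a passage to the torsion-free quotient, to Seidenberg's polynomial-ring bound; the noetherian case is the classical Krull formula.

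Finally, I would take suprema. Writing $n_i \dfgl \rk(\diff(M_i))$, the hypothesis $r = \sup_i n_i \in \N$ ensures that $r$ is attained by some $i_0$, so
$$\sup_{i,U}\bigl(\dim(\mathscr{O}_S(U)) + n_i\bigr) = \dim(S) + r,$$
yielding the lower bound. The termwise estimate $(n_i+1)\dim(\mathscr{O}_S(U)) + n_i \leq (r+1)\dim(\mathscr{O}_S(U)) + r$ (valid since $n_i \leq r$ and $\dim \geq 0$) yields the upper bound $(r+1)\dim(S) + r$. If $S$ is locally noetherian then every $\mathscr{O}_S(U)$ on an affine open is noetherian, so the equality case of the ring-theoretic step applies termwise and delivers $\dim(\xm(S)) = \dim(S) + r$. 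The main obstacle I anticipate is the upper bound in the second step: while the lower bound and the noetherian equality are fairly standard, the bound $\dim(R[M]) \leq (n+1)\dim(R) + n$ for an arbitrary ring $R$ and a possibly infinitely generated cancellable $M$ is delicate and requires Seidenberg-type estimates carried out for monoid algebras.
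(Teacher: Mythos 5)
Your global structure---cover $\xm(S)$ by the affine opens $X_{\M,i}(U)=\spec(\mathscr{O}_S(U)[M_i])$, prove ring-theoretic bounds for $\dim(R[M])$, and take suprema---is the same as the paper's, and the covering and supremum steps are carried out correctly. The genuine gap is in the second step, exactly where you anticipate trouble. For the upper bound you propose to pass ``via the localisation $R[M]\subseteq R[\diff(M)]$'' to a polynomial-ring estimate; but \ref{alg80} exhibits $R[\diff(M)]$ as a ring of fractions of $R[M]$, so this passage only yields $\dim(R[\diff(M)])\leq\dim(R[M])$, which bounds the wrong side. The ingredient you are missing is the (nontrivial) equality $\dim(R[M])=\dim(R[\diff(M)])=\dim(R[\N^{\oplus n}])$ for a cancellable monoid $M$ with $\rk(\diff(M))=n$ (Gilmer, \emph{Commutative semigroup rings}, 21.4 and 17.1); the content of that theorem is precisely that this localisation does not drop the dimension. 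This single equality does all the work: it converts the problem into one about $R[X_1,\dots,X_n]$, where the two-sided bound $\dim(R)+n\leq\dim(R[X_1,\dots,X_n])\leq(n+1)\dim(R)+n$ is the Arnold--Gilmer dimension-sequence estimate (iterating Seidenberg's one-variable bound would only give $2^n(\dim(R)+1)-1$), and where the noetherian case is the classical formula. Without it your noetherian equality case is also incomplete, since Krull's formula concerns polynomial rings and not $R[M]$ directly. So the proposal is structurally correct, but the decisive lemma is asserted rather than proved, and the one reduction you sketch for it runs in the wrong direction.
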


\begin{proof}
If $R$ is a ring and $M$ is a cancellable monoid with $m\dfgl\rk(\diff(M))\in\N$, then $\dim(R[M])=\dim(R[\diff(M)])=\dim(R[\N^{\oplus m}])$ (\cite[21.4; 17.1]{gilmer}), hence $\dim(R)+m\leq\dim(R[M])\leq(m+1)\dim(R)+m$ (\cite[4.1]{arnold}), and if $R$ is moreover noetherian then $\dim(R)+m=\dim(R[M])$ (\cite[VIII.3.4 Proposition 7, Corollaire 3]{ac}). Choosing an affine open covering of $S$, this yields the claim (\cite[IV.5.1.4]{ega}).
\end{proof}

\begin{cor}
a) If\/ $\M$ is torsionfree and cancellable, $r<\infty$, and $S$ is locally noetherian, then $\xm(S)$ is equidimensional if and only if $S$ is equidimensional or $I=\emptyset$.

b) If\/ $\M$ is cancellable and of finite type, and $I$ is finite, then $\xm(S)$ is artinian if and only if $S$ is artinian and\/ $\M$ is finite, or $S=\emptyset$, or $I=\emptyset$.

c) If\/ $\M$ is torsionfree, cancellable, and of finite type, and $I$ is finite, then $\tm(S)$ is finite if and only if\/ $\M$ is zero or $S=\emptyset$.
\end{cor}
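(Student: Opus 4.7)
For part~(a), my plan is to compare the irreducible components on the two sides. Since $\M$ is torsionfree and cancellable and $I\neq\emptyset$, Corollary \ref{n270}~a) provides a bijection between the irreducible components of $S$ and those of $\xm(S)$ given by $Z\mapsto\xm(Z)$. Equipping each such $Z$ with its reduced induced structure keeps it locally noetherian, so Proposition \ref{n310} (applied with $r<\infty$) yields $\dim(\xm(Z))=\dim(Z)+r$. Since $r$ is independent of the component, the components of $\xm(S)$ share a common dimension if and only if those of $S$ do; the case $I=\emptyset$ is trivial as $\xm(S)=\emptyset$.

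For part~(b), I will handle sufficiency and necessity separately. If $S=\emptyset$ or $I=\emptyset$ then $\xm(S)=\emptyset$ is artinian; if instead $S$ is artinian and $\M$ is finite, I cover $S$ by spectra of artinian rings $R_\alpha$, observe that each $R_\alpha[M_i]$ is a finite $R_\alpha$-module and hence an artinian ring, and conclude (using that $I$ is finite) that $\xm(S)$ is covered by finitely many spectra of artinian rings and is therefore artinian. For the converse, suppose $\xm(S)$ is artinian with $I\neq\emptyset$ and $S\neq\emptyset$. Noetherianness of $\xm(S)$ combined with Proposition \ref{n70}~b) gives that $S$ is noetherian and $\M$ is of finite type; then Proposition \ref{n310} yields $\dim(S)+r\leq\dim(\xm(S))=0$, forcing $\dim(S)=0$ (so $S$ is artinian) and $r=0$. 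The main obstacle here is to pass from $r=0$ and finite type of $\M$ to finiteness of every $M_i$: I note that $\diff(M_i)$ is a finitely generated torsion abelian group and hence finite, and then invoke cancellability of $\M$ to embed $M_i$ into $\diff(M_i)$, whence $M_i$ is finite.

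For part~(c), if $\M$ is zero then every $R[M_i]=R$ and the canonical section $S\rightarrowtail\xm(S)$ is an isomorphism, making $\tm(S)=\Id_S$ finite; the case $S=\emptyset$ is trivial. Conversely, assume $\tm(S)$ is finite and $S\neq\emptyset$, and pick any $s\in S$ and any $i\in I$ (if $I=\emptyset$, $\M$ is vacuously zero). Base changing the finite morphism $\tm(S)$ to $\kappa(s)$, the fibre $\xm(\kappa(s))$ is finite over $\kappa(s)$, hence a finite disjoint union of artinian local closed points; any open subscheme is therefore also finite over $\kappa(s)$, and in particular so is $X_{\M,i}(\kappa(s))=\spec(\kappa(s)[M_i])$. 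Thus $\kappa(s)[M_i]$ is finite-dimensional over $\kappa(s)$, and since it is free over $\kappa(s)$ with basis $M_i$ this forces $M_i$ to be finite. A finite cancellable monoid is a group (translation by any element is injective, hence bijective), so $\diff(M_i)=M_i$ is a finite torsionfree abelian group, and hence $M_i=0$.
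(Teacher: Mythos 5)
Your proof is correct, and for a) and the necessity direction of b) it is essentially the paper's argument: the bijection of irreducible components from \ref{n270} combined with the dimension formula of \ref{n310} for a), and \ref{n70}~b) plus \ref{n310} to force $S$ noetherian of dimension $0$ and $r=0$ for b). You diverge slightly in two places, both harmlessly and arguably more transparently. For the sufficiency in b) you cover $\xm(S)$ by finitely many spectra of artinian rings $R_\alpha[M_i]$ instead of running the chain of equivalences through $\dim(\xm(S))\leq 0$ again; and you make explicit the step (left implicit in the paper) that under cancellability and finite type, $r=0$ is equivalent to $\M$ being finite. For c) the paper deduces $r=0$ from the dimension formula \ref{n310} applied to a fibre over a field and then uses that a torsionfree finitely generated group of rank $0$ is trivial, whereas you read off finiteness of $M_i$ directly from the finite-dimensionality of $\kappa(s)[M_i]$ as a free module with basis $M_i$ and then observe that a finite cancellable torsionfree monoid is the trivial group; this avoids \ref{n310} entirely in c) and is a touch more elementary, at the cost of the small auxiliary fact that an open subscheme of a finite $\kappa(s)$-scheme is again finite (which you justify correctly via discreteness).
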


\begin{proof}
a) If $I\neq\emptyset$ then $S$ is equidimensional if and only if $\dim(Z)=\dim(S)$ for every irreducible component $Z$ of $S$. Since $S$ is locally noetherian this holds if and only if $\dim(\xm(Z))=\dim(Z)+r=\dim(S)+r$ for every irreducible component $Z$ of $S$ (\ref{n310}), and this is equivalent to $\xm(S)$ being equidimensional (\ref{n270}). b) If $S=\emptyset$ or $I=\emptyset$ this is clear. Otherwise, $\xm(S)$ is artinian if and only if $\xm(S)$ is noetherian and $\dim(\xm(S))\leq 0$. This holds if and only if $S$ is noetherian and $\dim(S)+r\leq 0$ (\ref{n70} b), \ref{n310}), hence if and only if $S$ is noetherian, $\dim(S)\leq 0$, and $r=0$, and thus if and only if $S$ is artinian and $r=0$ (\cite[I.2.8.2]{ega}). c) Suppose that $\tm(S)$ is finite and that $S\neq\emptyset$ and $I\neq\emptyset$. There exists a field $K$ such that $\tm(K)$ is finite (\cite[II.6.1.5]{ega}). It follows that $\xm(K)$ is locally noetherian and $\tm(K)$ is faithfully flat  (\ref{n70} a), \ref{n10} c)), implying $r=\dim(K)+r=\dim(\xm(R))=\dim(K)=0$ (\ref{n310}, \cite[IV.5.4.2]{ega}), and therefore $M_i=0$ for every $i\in I$. The converse is clear.
\end{proof}


\section{Toric schemes}\label{sec5}

\noindent\textit{Throughout this section let $V$ be an $\R$-vector space of finite dimension, let $n\dfgl\dim_{\R}(V)$, let $N$ be a\/ $\Z$-structure on $V$, and let $M\dfgl N^*$ denote the dual\/ $\Z$-structure on the dual space $V^*$.}\footnote{A \textit{$\Z$-structure on $V$} is a subgroup $N$ of the additive group underlying $V$ with $\langle N\rangle_{\R}=V$ and $\rk_{\Z}(N)=n$, or -- equivalently -- such that the image of the canonical morphism of $\Q$-vector spaces $\Q\otimes_{\Z}N\rightarrow V$ with $a\otimes x\mapsto ax$ is a $\Q$-structure on $V$ in the sense of \cite[II.8.1]{a}. Note that $M$ is canonically isomorphic to and identified with a $\Z$-structure on $V^*$, and that the canonical identification of $V$ and $V^{**}$ identifies $N$ with $M^*$.}\smallskip

Before defining toric schemes we briefly recall some terminology and facts about polycones and fans and refer the reader to \cite{bg} for details and proofs.

\begin{no}
For $A\subseteq V$ we denote by $A^{\vee}$ the dual $\{u\in V^*\mid u(A)\subseteq\R_{\geq 0}\}$ of $A$, we set $A^{\vee}_M\dfgl A^{\vee}\cap M$, and we denote by $\cone(A)$ the set of $\R$-linear combinations of $A$ with coefficients in $\R_{\geq 0}$. A subset $\sigma\subseteq V$ is called an \textit{$N$-polycone (in $V$)} if it is the intersection of finitely many closed halfspaces in $V$ defined by hyperplanes having bases in $N$ (or -- equivalently -- if there is a finite subset $A\subseteq N$ with $\sigma=\cone(A)$) that does not contain a line. If $\sigma$ is a $N$-polycone then a \textit{face of $\sigma$} is a subset $\tau\subseteq\sigma$ such that there exists $u\in\sigma^{\vee}_M$ with $\tau=\sigma\cap\ke(u)$, and the set $\face(\sigma)$ of faces of $\sigma$ consists of finitely many $N$-polycones. The relation ``$\tau$ is a face of $\sigma$'' is an ordering on the set of $N$-polycones, denoted by $\tau\fleq\sigma$.

An \textit{$N$-fan (in $V$)} is a finite set $\sig$ of $N$-polycones with $\sigma\cap\tau\in\face(\sigma)\subseteq\sig$ for $\sigma,\tau\in\sig$. An $N$-fan $\sig$ is considered as an ordered set by means of the ordering induced by $\sigma\fleq\tau$ (coinciding with the ordering induced by $\sigma\subseteq\tau$), hence it is a finite lower semilattice with $\inf(\sigma,\tau)=\sigma\cap\tau$ for $\sigma,\tau\in\sig$. It is called \textit{full (in $V$)} if $\langle\bigcup\sig\rangle_{\R}=V$, \textit{complete (in $V$)} if $\bigcup\sig=V$, and \textit{$N$-regular} if every $\sigma\in\sig$ is generated by a subset of a $\Z$-basis of $N$.
\end{no}

\begin{no}
Let $\sig$ be an $N$-fan. If $\sigma\in\sig$ then $\sigma^{\vee}_M$ is a torsionfree, integrally closed monoid of finite type (\ref{alg10}). For $\sigma,\tau\in\sig$ it holds $(\sigma\cap\tau)^{\vee}_M=\sigma^{\vee}_M+\tau^{\vee}_M$, and if moreover $\tau\fleq\sigma$ then $\sigma^{\vee}_M$ is a submonoid of $\tau^{\vee}_M$ and there exists $u\in\sigma^{\vee}_M$ such that $\tau^{\vee}_M=\sigma^{\vee}_M-u$ is the monoid of differences obtained from $\sigma^{\vee}_M$ by inverting $u$. Hence, the family $(\sigma^{\vee}_M)_{\sigma\in\sig}$ together with the canonical injections is an openly immersive projective system of submonoids of $M$ over the lower semilattice $\sig$ (\ref{alg130}, \ref{alg150}). By \ref{alg130} it defines a functor $\xs\colon\sch\rightarrow\sch$ over $\Id_{\sch}$ (whose structural morphism we denote by $\ts$), furnished with a finite open covering $(X_{\sigma})_{\sigma\in\sig}$. If $S$ is a scheme then $\ts(S)\colon\xs(S)\rightarrow S$ is called \textit{the toric scheme over $S$ associated with $\sig$.}
\end{no}

When studying a toric scheme (or variety) it may be convenient when the defining fan is full. There is the following easy (but unfortunately noncanonical) reduction to this case, in which toric schemes naturally appear (and which is not available in this form for toric varieties).

\begin{no}
Let $\sig$ be an $N$-fan, let $V'\dfgl\langle\bigcup\sig\rangle_{\R}$, let $N'\dfgl N\cap V'$, and let $\sig'$ denote the set $\sig$ considered as an $N'$-fan in $V'$. Then, there is a (noncanonical) isomorphism of functors $\xs(\bullet)\cong X_{\sig'}(\bullet[N/N'])$. Indeed, it suffices to show $$\xs(\Z)\cong X_{\sig'}(\Z)\otimes\Z[N/N'].$$ We set $M'\dfgl(N')^*$, write $\sigma'$ for $\sigma\in\sig$ considered as an $N'$-polycone in $V'$, and choose an isomorphism $M'\oplus N/N'\overset{\cong}\longrightarrow M$. This induces isomorphisms $$(\sigma')^{\vee}_{M'}\oplus N/N'\overset{\cong}\longrightarrow\sigma^{\vee}_M$$ compatible with the facial relation, hence isomorphisms $$X_{\sigma}(\Z)\overset{\cong}\longrightarrow X_{\sigma'}(\Z)\otimes\Z[N/N']$$ compatible with the facial relation, and thus gluing yields the claim.
\end{no}

Now we can apply the results from Section 2 to derive the basic geometric properties of toric schemes.

\begin{thm}\label{thm1}
a) $\ts(S)$ is a separated, connected, normal Cohen-Macaulay morphism of finite presentation, and if $\sig\neq\emptyset$ then it is irreducible; it is faithfully flat if and only if $\sig\neq\emptyset$ or $S=\emptyset$, and it is finite if and only if $n=0$ or $\sig=\emptyset$ or $S=\emptyset$.

b) $\xs(S)$ is quasiseparated, separated, quasicompact, (locally) noetherian, pointwise noetherian, topologically (locally) noetherian, jacobsonian, connected, reduced, normal, $(S_k)$ for $k\in\N$, or Cohen-Macaulay if and only if $S$ is so or $\sig=\emptyset$; it is irreducible or integral if and only if $S$ is so and $\sig\neq\emptyset$; it is artinian if and only if $S$ is artinian and $n=0$, or $S=\emptyset$, or $\sig=\emptyset$.

c) It holds $\dim(S)+n\leq\dim(\xs(S))\leq(n+1)\dim(S)+n$; if $S$ is locally noetherian then it holds $\dim(S)+n=\dim(\xs(S))$.

d) If $S$ is locally noetherian, then $\xs(S)$ is equidimensional if and only if $S$ is so or $\sig=\emptyset$; if $n\neq 0$ and $S$ is pointwise noetherian, then $\xs(S)$ is (universally) catenary if and only if $S$ is universally catenary or $\sig=\emptyset$.

e) There is a canonical isomorphism $\xs(\bullet)_{\red}\cong\xs(\bullet_{\red})$, and if $\sig\neq\emptyset$ then the maps $Z\mapsto\xs(Z)$ and $Z\mapsto\ts(S)(Z)$ induce mutually inverse bijections between the sets of irreducible or connected components of $S$ and $\xs(S)$.
\end{thm}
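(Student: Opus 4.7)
The plan is to apply the results of Section \ref{sec3} to the openly immersive projective system $\M \dfgl ((\sigma^\vee_M)_{\sigma \in \sig}, \text{inclusions})$ that underlies the construction of $\xs$. Four preparatory observations will suffice. First, $\sig$ is a finite lower semilattice with $\inf(\sigma,\tau) = \sigma \cap \tau$. Second, each $\sigma^\vee_M$ is cancellable, torsionfree, integrally closed, and of finite type, so $\M$ has all four of these properties. Third, since no $\sigma \in \sig$ contains a line, its dual $\sigma^\vee$ has nonempty interior in $V^*$; hence $\sigma^\vee_M$ generates $M$ as a group, $\diff(\sigma^\vee_M) = M$, and consequently $r = n$ (when $\sig \neq \emptyset$). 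Fourth, the identity $(\sigma \cap \tau)^\vee_M = \sigma^\vee_M + \tau^\vee_M$ supplies the generating hypothesis of the separation criterion in Proposition \ref{n10} a).

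With these in hand, part a) assembles directly: separation from \ref{n10} a); flatness and faithful flatness from \ref{n10} c); finite presentation from Proposition \ref{n60} (since $\M$ is of finite type and $\sig$ is finite); connectedness and irreducibility from the first half of Proposition \ref{n90}; and normality and Cohen-Macaulayness of the morphism from Proposition \ref{n80} combined with the second half of Proposition \ref{n90}. Finiteness will use the concluding corollary of Section \ref{sec3}: since every nonempty fan contains $\{0\}$ with $\{0\}^\vee_M = M$, the system $\M$ vanishes precisely when $\sig = \emptyset$ or $n = 0$. Part b) is an enumeration of properties handled by Corollary \ref{n130}, Proposition \ref{n70}, Proposition \ref{n240}, Proposition \ref{n220}, the corollary following Proposition \ref{n80}, and (for artinianness) the same concluding corollary; in each case the finiteness or torsionfreeness hypothesis needed has already been verified.

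Parts c), d), e) will be similarly short. Part c) is Proposition \ref{n310} with $r = n$. In part d), equidimensionality is immediate from part a) of the corollary following \ref{n310}, while (universal) catenarity uses the proposition on catenarity placed between Propositions \ref{n70} and \ref{n90}, applied in the case $n \neq 0$ and $\sig \neq \emptyset$, where once again $\M$ is nonzero thanks to the presence of $\{0\}$. Part e) consists of Corollaries \ref{n245} and \ref{n270}, the latter applying because $\sig$ is finite.

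The only genuine content lies in the four preparatory observations, chiefly the identifications $r = n$ and $(\sigma \cap \tau)^\vee_M = \sigma^\vee_M + \tau^\vee_M$; everything else is bookkeeping that matches hypotheses to pre-existing results. The main obstacle will accordingly be organisational rather than mathematical: one must ensure that the various ``$\sig = \emptyset$'' and ``$S = \emptyset$'' exceptions in Theorem \ref{thm1} align cleanly with the corresponding exceptions built into each cited statement of Section \ref{sec3}.
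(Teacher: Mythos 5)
Your proposal is correct and follows exactly the route the paper intends: its proof of Theorem \ref{thm1} is literally ``Immediately from the results in Section 2,'' and your four preparatory observations (finite lower semilattice, the four monoid properties of $\sigma^{\vee}_M$, $r=n$ via pointedness of the cones, and $(\sigma\cap\tau)^{\vee}_M=\sigma^{\vee}_M+\tau^{\vee}_M$ for the separation criterion) together with the subsequent bookkeeping are precisely the details the paper leaves implicit. No gaps.
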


\begin{proof}
Immediately from the results in Section 2.
\end{proof}

The last properties treated here are Serre's properties $(R_k)$ and regularity. They turn out to behave in an essentially different way than the above properties: they depend on the fan $\sig$. This is known in the classical case, and this behaviour is also known in general for properness: By \cite[4.4]{dem} (where the hypothesis that $\sig$ is $N$-regular is not needed), $\ts(S)$ is proper if and only if $\sig$ is complete or $\sig=\emptyset$ or $S=\emptyset$. Recall that for $k\in\N$ a scheme $X$ is said to have property $(R_k)$ if it is locally noetherian and $\mathscr{O}_{X,x}$ is regular for every $x\in X$ with $\dim(\mathscr{O}_{X,x})\leq k$, and a morphism of schemes $X\rightarrow Y$ is said to have property $(R_k)$ if it is flat and $X_z\otimes_{\kappa(z)}K$ has $(R_k)$ for every $z\in Y$ and every finite extension $\kappa(z)\rightarrow K$. Furthermore, a scheme or morphism of schemes is called regular if it has $(R_k)$ for every $k\in\N$ (\cite[IV.5.8.1--2; IV.6.8.1]{ega}).

\begin{thm}\label{thm2}
The following statements are equivalent: (i) $\ts(S)$ is regular; (ii) $\ts(S)$ has $(R_n)$; (iii) $\sig$ is $N$-regular or $S=\emptyset$.
\end{thm}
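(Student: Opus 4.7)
The implication (i)$\Rightarrow$(ii) is immediate from the definition of regular morphism. For (iii)$\Rightarrow$(i), the case $S=\emptyset$ is trivial, so assume $\sig$ is $N$-regular; my plan is to show that $\ts(S)$ is in fact smooth, a stronger property than being regular. For each $\sigma\in\sig$ set $k\dfgl\dim(\sigma)$; by $N$-regularity of $\sig$ one can choose a $\Z$-basis $(e_1,\dots,e_n)$ of $N$ whose first $k$ vectors generate $\sigma$, and the dual basis of $M$ identifies $\sigma^{\vee}_M$ with the free monoid $\N^k\oplus\Z^{n-k}$. Hence $\Z[\sigma^{\vee}_M]$ is a Laurent polynomial ring over a polynomial ring over $\Z$, in particular smooth over $\Z$. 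Base changing to $S$ shows that each morphism $X_{\sigma}(S)\to S$ is smooth, and since $(X_{\sigma}(S))_{\sigma\in\sig}$ is an affine open covering of $\xs(S)$, the morphism $\ts(S)$ is smooth; smooth morphisms have geometrically regular fibres, hence enjoy property $(R_k)$ for every $k\in\N$, so (i) follows.

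For (ii)$\Rightarrow$(iii), assume $S\neq\emptyset$ and that $\ts(S)$ has $(R_n)$; the case $\sig=\emptyset$ is covered by the disjunction, so fix $\sig\neq\emptyset$ and aim to show $\sig$ is $N$-regular. Pick $s\in S$ and take $K\dfgl\kappa(s)$. By definition of $(R_n)$ for a morphism (applied with the trivial finite extension), the fibre $X_{\sig}(K)$ has property $(R_n)$. Theorem \ref{thm1} c) applied to the base $\spec(K)$ gives $\dim X_{\sig}(K)=n$, so every local ring of $X_{\sig}(K)$ has Krull dimension at most $n$, and $(R_n)$ amounts to regularity of the whole scheme $X_{\sig}(K)$. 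Consequently, for each $\sigma\in\sig$ the affine open $X_{\sigma}(K)=\spec(K[\sigma^{\vee}_M])$ is regular; this is the classical affine toric variety over $K$ attached to $\sigma$. I would finish by invoking the classical statement (cf.~\cite{bg}) that such a variety is regular if and only if $\sigma$ is $N$-regular. The usual proof inspects the distinguished $K$-rational point $x_{\sigma}\in X_{\sigma}(K)$ determined by $e_m\mapsto 1$ for $m\in\sigma^{\perp}\cap M$ and $e_m\mapsto 0$ otherwise; the local ring at $x_{\sigma}$ has Krull dimension $\dim(\sigma)$, while its cotangent space has $K$-dimension equal to the cardinality of the unique minimal generating system of the sharp finitely generated monoid $\sigma^{\vee}_M/(\sigma^{\vee}_M)^{\times}$. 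Regularity forces these two numbers to coincide, which in turn forces $\sigma^{\vee}_M\cong\N^{\dim(\sigma)}\oplus\Z^{n-\dim(\sigma)}$, and this is equivalent to $\sigma$ being generated by a subset of a $\Z$-basis of $N$.

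The main obstacle is this final classical step, namely passing from regularity of $\spec(K[\sigma^{\vee}_M])$ at $x_{\sigma}$ to $N$-regularity of $\sigma$; the careful part is identifying the cotangent space with the Hilbert basis of $\sigma^{\vee}_M$ modulo units and deducing freeness of the semigroup from the dimension equality. Everything else is formal: (i)$\Rightarrow$(ii) is a definition; (iii)$\Rightarrow$(i) uses only the smoothness of the $\Z$-algebras associated to the free monoids $\N^k\oplus\Z^{n-k}$ together with the gluing and base-change formalism of Section \ref{sec2}; and the dimension identity $\dim X_{\sig}(K)=n$ needed in (ii)$\Rightarrow$(iii) is furnished by Theorem \ref{thm1} c).
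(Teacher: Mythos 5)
Your proposal follows the same overall route as the paper: (i)$\Rightarrow$(ii) is definitional; (ii)$\Rightarrow$(iii) passes to a fibre over a field $K$, uses $\dim(\xs(K))=n$ from \ref{thm1} to promote $(R_n)$ to regularity of $\xs(K)$, and then invokes the classical characterisation of regular affine toric varieties; (iii)$\Rightarrow$(i) combines flatness with regularity of all fibres. The only real difference is that where the paper simply cites \cite[3.1.19]{cls} in both directions, you unpack the citations: for (iii)$\Rightarrow$(i) your explicit identification $\sigma^{\vee}_M\cong\N^k\oplus\Z^{n-k}$ and the resulting smoothness of $\Z[\sigma^{\vee}_M]$ over $\Z$ is a clean, self-contained argument that automatically handles the finite residue field extensions appearing in the definition of $(R_k)$ for morphisms. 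One bookkeeping slip in your sketch of the classical step: the distinguished point $x_{\sigma}$ is a $K$-rational point of the integral $n$-dimensional scheme $\spec(K[\sigma^{\vee}_M])$, so its local ring has Krull dimension $n$, not $\dim(\sigma)$, and correspondingly the cotangent space acquires $n-\dim(\sigma)$ extra dimensions from the invertible (torus) directions; the two discrepancies cancel, so the conclusion that regularity forces the Hilbert basis of $\sigma^{\vee}_M/(\sigma^{\vee}_M)^{\times}$ to have $\dim(\sigma)$ elements, hence $N$-regularity of $\sigma$, still stands.
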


\begin{proof}
If $\ts(S)$ has $(R_n)$ and $S\neq\emptyset$ then there exists a field $K$ such that $\xs(K)$ has $(R_n)$. As $\dim(\xs(K))=n$ (\ref{thm1}) this implies that $\xs(K)$ is regular, and then it is well-known that $\sig$ is $N$-regular (e.g.~\cite[3.1.19]{cls}). So, (ii) implies (iii). If $\sig$ is $N$-regular then it is well-known that $\xs(K)$ is regular for every field $K$ (e.g.~\cite[3.1.19]{cls}), and since $\ts(S)$ is flat (\ref{thm1}) it follows that $\ts(S)$ is regular. Hence, (iii) implies (i), and thus the claim is proven.
\end{proof}

\begin{cor}
a) Let $k\in\N$. If $S$ has $(R_k)$ and $\sig$ in $N$-regular, or $S=\emptyset$, or $\sig=\emptyset$, then $\xs(S)$ has $(R_k)$; if $k\geq n$ then the converse holds.

b) $\xs(S)$ is regular if and only if $S$ is regular and $\sig$ is $N$-regular, or $S=\emptyset$, or $\sig=\emptyset$.
\end{cor}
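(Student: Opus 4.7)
My plan is to combine Theorem \ref{thm2} with two standard facts about $(R_k)$ under flat local morphisms: the dimension formula $\dim\mathcal{O}_{X,x}=\dim\mathcal{O}_{Y,y}+\dim\mathcal{O}_{X_y,x}$ for a flat morphism $f\colon X\to Y$ locally of finite type (with $y=f(x)$), and the local flatness criterion for regularity (for a flat local map $A\to B$ of noetherian local rings, $B$ is regular iff $A$ and $B/\mathfrak{m}_A B$ are both regular). The trivial cases $S=\emptyset$ or $\sig=\emptyset$ force $\xs(S)=\emptyset$ and are immediate, so I assume both $S$ and $\sig$ are nonempty throughout.

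For the forward direction of (a), suppose $S$ has $(R_k)$ and $\sig$ is $N$-regular. Then $\xs(S)$ is locally noetherian by Theorem \ref{thm1}. For $x\in\xs(S)$ with $\dim\mathcal{O}_{\xs(S),x}\leq k$, set $s\dfgl\ts(S)(x)$ and apply the dimension formula: $\dim\mathcal{O}_{S,s}\leq k$, so $\mathcal{O}_{S,s}$ is regular by $(R_k)$; and by Theorem \ref{thm2}, the fiber $\xs(\kappa(s))$ is regular, so $\mathcal{O}_{\xs(\kappa(s)),x}$ is regular. The local flatness criterion then gives $\mathcal{O}_{\xs(S),x}$ regular, as wanted.

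For the converse in (a) with $k\geq n$, assume $\xs(S)$ has $(R_k)$. To show $S$ has $(R_k)$, given $s\in S$ with $\dim\mathcal{O}_{S,s}\leq k$, use faithful flatness of $\ts(S)$ (Proposition \ref{n10}(c)) to pick a generic point $x$ of some irreducible component of the fiber over $s$; then $\dim\mathcal{O}_{\xs(S),x}=\dim\mathcal{O}_{S,s}\leq k$ by the dimension formula, so $\mathcal{O}_{\xs(S),x}$ is regular, and descent of regularity along the faithfully flat local map $\mathcal{O}_{S,s}\to\mathcal{O}_{\xs(S),x}$ gives $\mathcal{O}_{S,s}$ regular. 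To show $\sig$ is $N$-regular, take $s$ to be a generic point of an irreducible component of $S$, so $\dim\mathcal{O}_{S,s}=0$ and hence $\mathcal{O}_{S,s}$ is a field $K$ by $(R_0)$; then for every $x$ in the fiber $\xs(K)$ one has $\mathcal{O}_{\xs(K),x}=\mathcal{O}_{\xs(S),x}$ with dimension at most $n\leq k$, hence regular. So $\xs(K)$ is regular and Theorem \ref{thm2} forces $\sig$ to be $N$-regular.

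Part (b) is then immediate: the forward direction of (a) applied for every $k\in\N$ yields the ``if'' part of (b); conversely, regularity of $\xs(S)$ implies in particular $(R_n)$, so (a) gives $N$-regularity of $\sig$, while the descent argument in the previous paragraph --- which does not actually use $k\geq n$ --- shows $S$ has $(R_k)$ for every $k$ and is therefore regular. The main technical point throughout is ensuring that one can produce, for each $s\in S$, a point $x\in\xs(S)$ above $s$ whose local ring has dimension exactly $\dim\mathcal{O}_{S,s}$; this is arranged by the faithful flatness of $\ts(S)$ together with the dimension formula of EGA IV.6.1.2 applied to the finite presentation morphism $\ts(S)$.
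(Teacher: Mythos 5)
Your argument is correct and is essentially the paper's own proof in expanded form: the paper simply cites Theorems \ref{thm1} and \ref{thm2} together with \cite[IV.6.5.3]{ega}, and your combination of the flat dimension formula, the local flatness criterion for regularity, and the choice of generic points in the fibres is precisely the content of that EGA reference. The only cosmetic looseness is that you invoke Theorem \ref{thm2} to get $N$-regularity from regularity of the single fibre $\xs(K)$, whereas strictly you are reusing the step inside its proof (regularity of $\xs(K)$ for one field $K$ forces $\sig$ to be $N$-regular); this is harmless.
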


\begin{proof}
Immediately from \ref{thm1}, \ref{thm2} and \cite[IV.6.5.3]{ega}.
\end{proof}


\smallskip

\noindent\textbf{Acknowledgement.} I thank Markus Brodmann, Stefan Fumasoli, J\"urgen Hausen, Kazuma Shimomoto, and {\fontencoding{T5}\selectfont Ng\ocircumflex{} Vi\d\ecircumflex{}t Trung} for their various help, and the referee for his careful reading and his comments and suggestions.


\end{document}